\documentclass{article}

\usepackage{amsmath, amssymb, amsthm}
\usepackage{mathtools}
\usepackage{geometry}
\usepackage{paralist}
\usepackage{indentfirst}
\usepackage{tikz}

\DeclarePairedDelimiter\abs{\lvert}{\rvert}
\DeclarePairedDelimiter\card{\lvert}{\rvert}
\DeclarePairedDelimiter\order{\lvert}{\rvert}

\newcommand{\indicator}[1]{{\bf 1}_{#1}}

\newcommand{\PP}{\mathcal{P}}
\newcommand{\QQ}{\mathcal{Q}}
\newcommand{\spider}{S}

\theoremstyle{plain}
\newtheorem{theorem}{Theorem}[section]
\newtheorem{lemma}[theorem]{Lemma}
\newtheorem{corollary}[theorem]{Corollary}
\newtheorem{conjecture}[theorem]{Conjecture}

\theoremstyle{definition}
\newtheorem*{remark}{Remark}

\usepackage[pdftex, colorlinks]{hyperref}

\title{The tree search game for two players}

\author{
Ravi B. Boppana\thanks{Department of Mathematics, Massachusetts Institute of Technology, Cambridge, Massachusetts, USA\@.
Email address: {\tt rboppana@mit.edu} }
\and 
Joel Brewster Lewis\thanks{Department of Mathematics, The George Washington University,
Washington, DC, USA\@.  
Email address: {\tt jblewis@gwu.edu} }
}

\begin{document}

\maketitle

\begin{abstract}
We consider a two-player search game on a tree $T$.  One vertex (unknown to the players) is randomly selected as the target.  The players alternately guess vertices.  If a guess $v$ is not the target, then both players are informed in which subtree of $T \smallsetminus v$ the target lies.  The winner is the player who guesses the target. 

When both players play optimally, we show that each of them wins with probability approximately $1/2$.  When one player plays optimally and the other plays randomly, we show that the player with the optimal strategy wins with probability between $9/16$ and $2/3$ (asymptotically).  When both players play randomly, we show that each wins with probability between $13/30$ and $17/30$ (asymptotically).
\end{abstract}

\section{Introduction}

We consider the following competitive variant of traditional binary search: 
two players seek an (unknown, uniformly random) element of 
the set~$\{1, \dots, n\}$.  The players alternately guess elements of the set; if a guess is incorrect, then both players are informed whether the secret number is larger or smaller than the guess.  The winner is the player who guesses the secret number.

We consider the following more general variant of the game, using a model of binary search on trees introduced by Onak and Parys~\cite{OP}: 
the starting position is not a set of $n$~numbers, but rather a labeled tree~$T$ on $n$~vertices, one vertex of which has been selected (uniformly at random) as the target.  
The two players alternately choose vertices; if a guess~$v$ is incorrect, 
then both players are informed in which subtree of~$T \smallsetminus v$ the target vertex lies.
The winner is the player who guesses the target vertex.  
One immediately recovers the previous game upon choosing $T$ to be a path on $n$~vertices.

We consider the variant of the game in which both players play according to an optimal strategy, as well as variants where one or both players play uniformly at random.  When both players play optimally, we completely analyze the game: 
if $n$ (the number of vertices) is even, then the game is fair, 
while if $n$ is odd, then the first player wins with probability~$\frac{1}{2} + \frac{1}{2n}$, regardless of the structure of the tree.  We also describe all optimal strategies in this case.  

When one or both players play randomly
(selecting a vertex uniformly at random from among the vertices that could be the target),  
the probabilities of winning are surprisingly more complicated to analyze.  
We complete the analysis in the case of paths and stars, 
and conjecture that the probability of a first-player win always lies between these two extremes (Conjectures~\ref{conj:half random tree bounds} and~\ref{conj:all random}); 
we are able to establish the conjectures in some cases.
Specifically, when one player plays optimally and the other plays randomly,
we show that the optimally playing competitor wins with probability between $9/16$ and $2/3$ (asymptotically). 
When both players play uniformly at random, 
we show that each wins with probability between $13/30$ and $17/30$ (for~$n \ge 2$).

The structure of the paper is as follows: in Section~\ref{sec:background}, we establish the terminology and notation that is used throughout the paper as well as some basic lemmas.  In Section~\ref{sec:strategic}, we completely analyze the game in the case of two optimally playing competitors.  In Section~\ref{sec:mixed}, we study the game in the case that one player plays according to an optimal strategy while the other chooses vertices uniformly at random.  In Section~\ref{sec:random}, we study the game in the case that \emph{both} players play uniformly at random.  Finally, in Section~\ref{sec:concluding remarks}, we give a number of open problems including variants of the game that we believe might be of interest.

\section{Background and notation}
\label{sec:background}

We begin by establishing some basic terminology and notation for the rest of the paper.

As usual, a \emph{tree} is a connected acyclic graph.
Our trees are undirected and unrooted, with a finite but positive number of vertices.
We denote by $V(T)$ the vertex set of tree~$T$.
The \emph{order}~$\order{T}$ of tree~$T$ is~$\card{V(T)}$,
the number of vertices of~$T$.  
We denote by $E(T)$ the edge set of tree~$T$.
We denote by $v \sim w$ the relation that vertices $v$ and $w$ are joined by an edge of the tree.

The \emph{degree}~$\deg(v)$ of a vertex~$v$ of a tree~$T$
is the number of edges of~$T$ incident to~$v$.
A \emph{leaf} is a vertex of degree~$1$.
Every tree $T$ of order~$n$ has $n - 1$~edges, 
and the sum of the vertex degrees of~$T$ is~$2(n - 1)$.
Given a vertex~$v$ of~$T$, 
let $T \smallsetminus v$ be the graph that results from deleting~$v$ and its edges from~$T$;
the graph~$T \smallsetminus v$ is a forest with $\deg(v)$~components, 
each of which is a tree.  
Given an edge~$e$ of~$T$, 
let $T \smallsetminus e$ be the graph that results from deleting~$e$ from~$T$;
the graph~$T \smallsetminus e$ is a forest with two components, 
each of which is a tree.

Denote by $P_n$ the path graph on $n$~vertices and by $S_n$ the star graph on $n$~vertices. 
In particular, $P_1 = S_1$, $P_2 = S_2$, and $P_3 = S_3$ are the unique trees on $1$, $2$, and $3$ vertices, respectively.

At several points, we will be concerned with counting leaves and other small subtrees near ``the boundary'' of a given tree.  To that end, define for each tree~$T$ and each integer~$k$ the \emph{limb set}
\[
L_k(T) 
  = \left\{ (v, T') \colon\, v \in V(T) \textrm{ and } 
	             T' \textrm{ is a component of } T \smallsetminus v \textrm{ of order } k \right\}
\]
and the \emph{limb number}
\[
\ell_k(T) = \card{L_k(T)}.
\] 
If $k \le 0$ or $k \ge \order{T}$, then $\ell_k(T) = 0$.
We record below some basic information about the limb numbers.

\begin{lemma} \label{lemma:ell}
Let $T$ be a tree on $n$~vertices.
\begin{enumerate}[(a)]

\item
$\ell_1(T)$ is the number of leaves of~$T$.

\item 
If $n > 1$, then $\ell_1(T) \geq 2$.

\item 
For every integer~$k$, we have $\ell_k(T) = \ell_{n - k}(T)$.

\item
The sum $\sum_k \ell_k(T)$ is $2(n - 1)$.

\item
The sum $\sum_k k \ell_k(T)$ is $n(n - 1)$.

\item 
For every integer~$k$, we have $\ell_1(T) \geq \ell_k(T)$.

\item
If $T$ is not the tree in Figure~\ref{fig:counter-example}, 
then $\ell_1(T) + \ell_3(T) \ge \ell_2(T) + \ell_4(T)$.
\begin{figure}
\begin{center}
\begin{tikzpicture}[node distance = 2cm, v/.style = {circle, draw}]
\node[v] (1)                {};
\node[v] (2) [right of = 1] {};
\node[v] (3) [right of = 2] {};
\node[v] (4) [right of = 3] {};
\node[v] (5) [right of = 4] {};
\draw (1) -- (2) -- (3) -- (4) -- (5);
\node[v] (6) [below of = 3] {};
\draw (3) -- (6);
\end{tikzpicture}
\end{center}
\caption{The unique tree for which the inequality in Lemma~\ref{lemma:ell}(g) fails}
\label{fig:counter-example}
\end{figure}
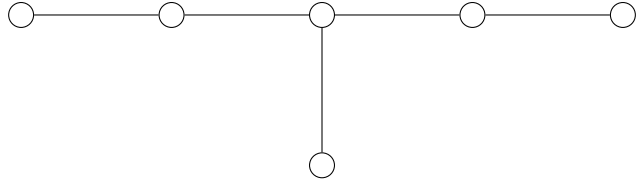

\end{enumerate}
\end{lemma}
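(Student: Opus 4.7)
My plan hinges on a single central bijection: each limb $(v,T') \in L_k(T)$ corresponds to a directed edge of~$T$. Because $T$ is acyclic, $v$ has a unique neighbor $w$ inside the subtree~$T'$, and the map $(v,T') \mapsto (v,w)$ is a bijection between $\bigsqcup_k L_k(T)$ and the directed edges of~$T$; under it, $|T'|$ is the size of the $w$-side of $T \smallsetminus e$, where $e = \{v,w\}$. From this, all of parts~(a)--(e) fall out immediately: (a) identifies size-$1$ limbs with leaves; (b) is the standard fact that the endpoints of a longest path in a tree of order $\ge 2$ are leaves; (c) follows from the orientation-reversing involution $(v,w) \mapsto (w,v)$, which swaps $L_k$ and $L_{n-k}$; (d) counts the $2(n-1)$ directed edges; and (e) sums the two side-sizes $a + b = n$ contributed by each edge, giving $n(n-1)$ in total.

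For part~(f), I would argue that for $k < n/2$, the size-$k$ sides arising from the limbs in $L_k(T)$ are pairwise disjoint. Suppose $e \neq e'$ are edges of $T$ with split~$(k,n-k)$ and size-$k$ sides $A$ and~$A'$. Since $e$ is the unique edge between $A$ and its complement~$B$, the edge $e'$ has both endpoints in~$A$ or both in~$B$; a routine size count using $k < n/2$ rules out $e' \subseteq A$, and in the remaining case $A' \subseteq B$, which is disjoint from~$A$. Each size-$k$ side contains a leaf of~$T$: a single-vertex side is itself a leaf, and any larger side has at least two subtree-leaves by~(b), at least one of which is not the boundary vertex and hence is a leaf of~$T$. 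Summing over the disjoint sides gives $\ell_k \le \ell_1$ for $k < n/2$. The same size-counting argument also shows $T$ has at most one edge of split $(n/2,n/2)$, so $\ell_{n/2} \le 2 \le \ell_1$, and the case $k > n/2$ reduces to the previous ones via~(c).

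Part~(g) will be the main obstacle, since it is the only part admitting an exception. The approach I would take is an explicit charging: injectively match each limb in $L_2(T) \cup L_4(T)$ to a distinct limb in $L_1(T) \cup L_3(T)$ using the local structure around its defining edge. An edge contributing to $\ell_2(T)$ has a size-$2$ side that is forced to be a ``leaf plus degree-$2$ neighbor,'' and an edge contributing to $\ell_4(T)$ has a size-$4$ side that is one of a few local shapes (a hanging path, a ``claw,'' or variants), each of which exposes nearby leaves or size-$3$ subtrees to which the charge can be sent. I would expect the tree of Figure~\ref{fig:counter-example} to emerge organically as the unique configuration in which these charges collide. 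As a safer fallback, one can verify~(g) directly for all trees of order at most~$6$ and then induct on~$n$: remove a leaf and track the resulting shift in each $\ell_k$, with the two regimes ``removed leaf's neighbor has degree $2$'' and ``degree $\ge 3$'' behaving differently (in the former, $\ell_1$ is unchanged because the neighbor becomes a leaf of $T^-$); the tightest inductive case should pinpoint precisely the exceptional tree.
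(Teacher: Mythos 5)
Parts (a)--(f) of your proposal are correct and essentially identical to the paper's argument: your directed-edge indexing of $\bigcup_k L_k(T)$ is exactly the paper's correspondence $(v,T_{v,w})\leftrightarrow(v,w)$, and your disjointness argument for (f) (locating $e'$ inside one side of the split and doing a size count) is a cleaner packaging of the paper's four-case analysis of where $w'$ and $w''$ sit on the $v'$--$v''$ path; the leaf-charging that finishes (f) is the same in both.

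Part (g), however, is only a plan, and both routes you sketch have a genuine gap. For the charging argument, the entire content of the statement is the explicit injection $L_2(T)\cup L_4(T)\hookrightarrow L_1(T)\cup L_3(T)$ and the proof that it is injective; you assert that the local shapes ``expose nearby leaves or size-$3$ subtrees'' and that the exceptional tree ``should emerge organically,'' but you never specify where each limb is sent or why two limbs cannot be sent to the same target. (The paper's map sends $(v,T')$ to $(w,T'')$, where $w$ is the neighbor of $v$ in $T'$ and $T''$ is an odd-order component of $T'\smallsetminus w$; injectivity is then a counting argument that only works for $n\ge 8$, which is precisely why the orders $5,6,7$ must be handled separately and why the order-$6$ exception exists. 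Your sketch does not confront this threshold at all.) The fallback induction is also broken as stated: you propose verifying orders up to $6$ and then removing a leaf, but for $n=7$ the tree $T^-=T\smallsetminus x$ can be the exceptional tree of Figure~\ref{fig:counter-example}, where $\ell_1+\ell_3-\ell_2-\ell_4=-1$, so the induction hypothesis is unavailable exactly where you need it. Moreover, the bookkeeping under leaf removal is not sign-definite: writing $b_k$ for the number of limbs of $T^-$ of order $k$ whose subtree contains the neighbor $u$ of the deleted leaf, one finds
\[
\bigl(\ell_1+\ell_3-\ell_2-\ell_4\bigr)(T)-\bigl(\ell_1+\ell_3-\ell_2-\ell_4\bigr)(T^-)
  = 1-2b_1+2b_2-2b_3+b_4 ,
\]
which can be negative, so the inductive step does not go through without substantial additional case analysis that you have not supplied.
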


\begin{proof} \
\begin{enumerate}[(a)]

\item
The map that sends the leaf~$w$ to the pair~$(v, T')$ where~$v$ is the unique neighbor of~$w$ and $T'$ is the tree with vertex set~$\{w\}$ is a bijection between the set of leaves of~$T$ and~$L_1(T)$.
Hence $\ell_1(T)$ is the number of leaves of~$T$.

\item
It is a standard result that may be found in many textbooks on graph theory that 
every finite tree with more than one vertex has at least two leaves; 
for example, see~\cite[Lemma 2.1.3]{West}.
A short proof: 
since the $n$ vertex degrees are positive integers and their sum is~$2(n - 1)$,
at least two of them must be equal to~$1$.  

\item
We index the (disjoint) union~$\bigcup_k L_k(T)$ as follows: 
if $v$ is a vertex of~$T$ and $w$ is a neighbor of~$v$, 
then let $T_{v, w}$ be the component of~$T \smallsetminus v$ that contains~$w$.  
Thus
\[
  L_k(T) = \{(v, T_{v, w}) \colon\, v \sim w \textrm{ and } \order{T_{v, w}} = k\}.
\]
Now fix an edge~$e = \{v, w\}$ of~$T$.    
Since $T_{v, w} \cup T_{w, v} = T \smallsetminus e$, 
one has  $\order{T_{v, w}} + \order{T_{w, v}} = n$.
It follows that $(v, T_{v, w}) \in L_k(T)$ if and only if $(w, T_{w, v}) \in L_{n - k}(T)$, and hence $\ell_k(T) = \ell_{n - k}(T)$.

\item
The map that sends the pair~$(v, T_{v, w})$ to the edge~$\{v, w\}$
is a two-to-one function from the union~$\bigcup_k L_k(T)$ to the edge set~$E(T)$.  
Hence $\sum_k \ell_k(T) = 2 \card{E(T)} = 2(n - 1)$.

\item
By part (c), we have
\[
  2 \sum_k k \ell_k(T)
	  = \sum_k k \ell_k(T) + \sum_k k \ell_{n - k}(T)
		= \sum_k k \ell_k(T) + \sum_k (n - k) \ell_k(T)
		= n \sum_k \ell_k(T) .
\]
Dividing by $2$ and using part (d) gives
\[
  \sum_k k \ell_k(T)
	  = \frac{n}{2} \sum_k \ell_k(T)
		= \frac{n}{2} \cdot 2(n - 1)
		= n(n - 1).
\]

\item
If~$k \le 0$, then $\ell_k(T) = 0 \le \ell_1(T)$.
If~$k = 1$, then $\ell_k(T) = \ell_1(T)$.  
Hence we may assume that $k > 1$.
By part~(c), we may further assume that~$k \le \frac{n}{2}$.

Suppose that $(v', T')$ and $(v'', T'')$ are distinct elements of~$L_k(T)$.
We claim that $T'$ and $T''$ are vertex-disjoint.
This clearly holds when $v' = v''$, so we may assume that $v' \ne v''$.
We have $T' = T_{v', w'}$ for some neighbor~$w'$ of~$v'$
and $T'' = T_{v'', w''}$ for some neighbor~$w''$ of~$v''$.
Are $w'$ and/or~$w''$ on the (unique) path between $v'$ and~$v''$ in~$T$? 
The four cases are illustrated in Figure~\ref{fig:four cases}.
\begin{figure}
\begin{center}
\begin{tikzpicture}[node distance = 2cm, v/.style = {circle, draw}]
\node    (0)                {Case 1:};
\node[v] (1) [right of = 0] {$v'$};
\node[v] (2) [right of = 1] {\color{blue} $w'$};
\node    (3) [right of = 2] {$\, \cdots \,$};
\node[v] (4) [right of = 3] {$v''$};
\node[v] (5) [right of = 4] {\color{blue} $w''$};
\draw (1) -- (2) -- (3) -- (4) -- (5);
\end{tikzpicture}
\begin{tikzpicture}[node distance = 2cm, v/.style = {circle, draw}]
\node    (0)                {Case 2:};
\node[v] (1) [right of = 0] {\color{blue} $w'$};
\node[v] (2) [right of = 1] {$v'$};
\node    (3) [right of = 2] {$\, \cdots \,$};
\node[v] (4) [right of = 3] {\color{blue} $w''$};
\node[v] (5) [right of = 4] {$v''$};
\draw (1) -- (2) -- (3) -- (4) -- (5);
\end{tikzpicture}
\begin{tikzpicture}[node distance = 2cm, v/.style = {circle, draw}]
\node    (0)                {Case 3:};
\node[v] (1) [right of = 0] {$v'$};
\node[v] (2) [right of = 1] {\color{blue} $w'$};
\node    (3) [right of = 2] {$\, \cdots \,$};
\node[v] (4) [right of = 3] {\color{blue} $w''$};
\node[v] (5) [right of = 4] {$v''$};
\draw (1) -- (2) -- (3) -- (4) -- (5);
\end{tikzpicture}
\begin{tikzpicture}[node distance = 2cm, v/.style = {circle, draw}]
\node    (0)                {Case 4:};
\node[v] (1) [right of = 0] {\color{blue} $w'$};
\node[v] (2) [right of = 1] {$v'$};
\node    (3) [right of = 2] {$\, \cdots \,$};
\node[v] (4) [right of = 3] {$v''$};
\node[v] (5) [right of = 4] {\color{blue} $w''$};
\draw (1) -- (2) -- (3) -- (4) -- (5);
\end{tikzpicture}
\end{center}
\caption{The four cases in the proof of Lemma~\ref{lemma:ell}(f)}
\label{fig:four cases}
\end{figure}
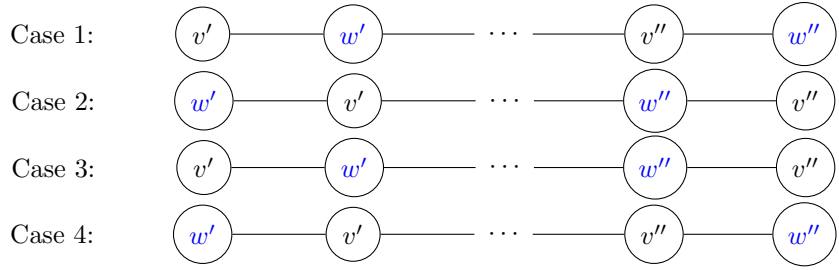
If either $w'$ or~$w''$ (but not both) were on the path between $v'$ and~$v''$ (Case~1 or~2), 
then one of the trees $T'$ and~$T''$ would strictly contain the other, 
which is impossible since they have equal orders. 
If both $w'$ and~$w''$ are on the path (Case~3), 
which includes as special cases the situation~$w' = w''$ and the situation $w' = v''$ and~$w'' = v'$,
then $V(T') \cup V(T'') = V(T)$,
while $\order{T'} + \order{T''} = 2k \le n$;
hence $V(T')$ and~$V(T'')$ partition~$V(T)$ 
and in particular are disjoint.
If neither $w'$ nor $w''$ is on the path (Case~4),
then $T'$ and~$T''$ are disjoint.
In every case, $T'$ and $T''$ are disjoint.  

By part~(b), since~$k > 1$, 
for every pair~$(v, T_{v, w})$ in~$L_k(T)$,
there is a vertex~$\phi(v, T_{v, w})$ different from~$w$ 
that is a leaf of subtree~$T_{v, w}$ and hence a leaf of~$T$.
By vertex disjointness, 
the map~$\phi$ is an injective function from~$L_k(T)$ to the set of leaves of~$T$.
Consequently, by part~(a), we have $\ell_k(T) \leq \ell_1(T)$. 
 
\item 
If $n \le 4$, then $\ell_4(T) = 0$ and part~(f) implies that $\ell_1(T) \ge \ell_2(T)$.  
If $n = 5$, then part~(c) implies that $\ell_1(T) = \ell_4(T)$ and $\ell_2(T) = \ell_3(T)$.
If $n = 6$, since $T$ is not the tree in Figure~\ref{fig:counter-example}, $T$ is one of the trees in Figure~\ref{fig:all six vertices}, and one checks by hand that $\ell_1(T) + \ell_3(T) \geq \ell_2(T) + \ell_4(T)$ in all cases.
\begin{figure}
\begin{center}
\raisebox{.3cm}{\scalebox{.5}{\begin{tikzpicture}[node distance = 2cm, v/.style = {circle, draw}]
\node[v] (1)                {};
\node[v] (2) [right of = 1] {};
\node[v] (3) [right of = 2] {};
\node[v] (4) [right of = 3] {};
\draw (1) -- (2) -- (3) -- (4);
\node[v] (5) [above right of = 4] {};
\draw (4) -- (5);
\node[v] (6) [below right of = 4] {};
\draw (4) -- (6);
\end{tikzpicture}}}
\qquad\qquad
\scalebox{.5}{\begin{tikzpicture}[node distance = 2cm, v/.style = {circle, draw}]
\node[v] (1)                {};
\node[v] (2) [right of = 1] {};
\node[v] (3) [right of = 2] {};
\node[v] (4) [right of = 3] {};
\draw (1) -- (2) -- (3) -- (4);
\node[v] (5) [above of = 3] {};
\node[v] (6) [below of = 3] {};
\draw (5) -- (3) -- (6);
\end{tikzpicture}}
\qquad\qquad
\scalebox{.5}{\begin{tikzpicture}[node distance = 2cm, v/.style = {circle, draw}]
\draw (-2, 0) node[v, fill = white]{} -- (0, 0) -- (-.62, 1.9) node[v, fill = white]{};
\draw (-.62, -1.9) node[v, fill = white]{} -- (0, 0) -- (1.62, 1.18) node[v, fill = white]{};
\draw (1.62, -1.18)  node[v, fill = white]{} -- (0, 0)  node[v, fill = white]{};
\end{tikzpicture}}

\bigskip

\scalebox{.5}{\begin{tikzpicture}[node distance = 2cm, v/.style={circle,draw}]
\node[v] (1) {};
\node[v] (2) [below right of = 1] {};
\node[v] (3) [below left of = 2] {};
\node[v] (7) [right of = 2] {};
\node[v] (8) [above right of = 7] {};
\node[v] (9) [below right of = 7] {};
\draw (1) -- (2);
\draw (3) -- (2) -- (7) -- (8);
\draw (7) -- (9);
\end{tikzpicture}}
\qquad\qquad
\raisebox{.7cm}{\scalebox{.5}{\begin{tikzpicture}[node distance = 2cm, v/.style = {circle, draw}]
\node[v] (1)                {};
\node[v] (2) [right of = 1] {};
\node[v] (3) [right of = 2] {};
\node[v] (4) [right of = 3] {};
\node[v] (5) [right of = 4] {};
\node[v] (6) [right of = 5] {};
\draw (1) -- (2) -- (3) -- (4) -- (5) -- (6);
\end{tikzpicture}}}
\end{center}
\caption{The trees with six vertices other than the tree in Figure~\ref{fig:counter-example}, with associated limb numbers $(\ell_1(T), \ell_2(T), \ell_3(T), \ell_4(T)) = (3, 1, 2, 1)$, $(4, 1, 0, 1)$, $(5, 0, 0, 0)$, $(4, 0, 2, 0)$, and $(2, 2, 2, 2)$}
\label{fig:all six vertices}
\end{figure}
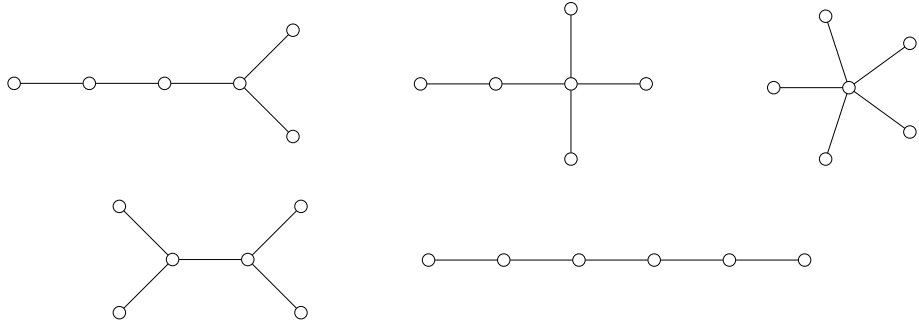 
If $n = 7$, then parts~(c) and~(f) imply that 
$\ell_3(T) = \ell_4(T)$ and $\ell_1(T) \geq \ell_2(T)$.  
Now suppose that~$n \ge 8$.  
We construct an injective map $\phi: L_2(T) \cup L_4(T) \hookrightarrow L_1(T) \cup L_3(T)$.  Given a pair $(v, T') \in L_2(T) \cup L_4(T)$, let $w$ be the neighbor of~$v$ in~$T'$.  
The graph~$T' \smallsetminus w$ is a forest on an odd number of vertices, 
so it has a connected component~$T''$ of odd order; 
define $\phi(v, T') = (w, T'')$.  
By construction, this map has the correct domain and range; 
it remains to show that it is injective.  
Suppose otherwise, 
so that $\phi(v_1, T_1) = \phi(v_2, T_2) = (w, T'')$ and $(v_1, T_1) \ne (v_2, T_2)$. 
Then by the definition of~$\phi$, 
the tree~$T$ has the form illustrated in Figure~\ref{fig:tree form}, namely, $v_1$ is adjacent to $w$ by an edge $e_1$ and $T_1$ is the component of containing $w$ when $e_1$ is removed from $T$, and $v_2$ is adjacent to $w$ by an edge $e_2$ and $T_2$ is the component containing $w$ when $e_2$ is removed from $T$.  
\begin{figure}
\begin{center}
\begin{tikzpicture}[node distance = 2cm, v/.style = {circle, draw}]
\draw[dashed] (0, 0) ellipse [x radius = 3, y radius = 2];
\draw[dashed] (2, 0) ellipse [x radius = 3.3, y radius = 2];
\node[v] (w) {$w$};
\node[v] (v1) [left of = w] {$v_1$};
\node (blank) [right of = w] {};
\node[v] (v2) [right of = blank] {$v_2$};
\draw (v1) -- node [below] {$e_1$} (w) -- node [below] {$e_2$} (v2);
\draw (w) -- (1.5, -.75);
\draw (w) -- (.5, -1.25);
\draw (w) -- (1, 1) node [right, circle, draw] {$T''$};
\draw (v1) -- (-1.5, 1);
\draw (v1) -- (-1.5, -1);
\draw (v2) -- (3.5, -1);
\draw (v2) -- (3.5, 1);
\node [above of = v2] {$T_1$};
\node [above of = v1] {$T_2$};
\end{tikzpicture}
\end{center}
\caption{The form of tree~$T$ in the proof of Lemma~\ref{lemma:ell}(g)}
\label{fig:tree form}
\end{figure}
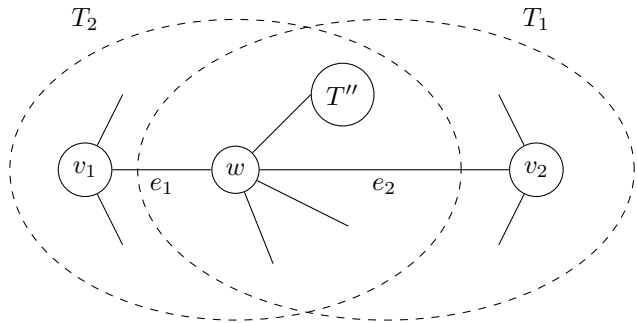
In this case, for any vertex $v$ of $T$, at most one of $e_1$ and $e_2$ lies on the path connecting $v$ to $w$, and therefore $V(T_1) \cup V(T_2) = V(T)$ and $w \in V(T_1) \cap V(T_2)$.  
However, this contradicts the hypotheses 
$\order{T} \geq 8$, $\order{T_1} \leq 4$, and $\order{T_2} \leq 4$.  
Thus $\phi$ is injective, as claimed.
\qedhere

\end{enumerate}
\end{proof}

In upcoming parts of the paper, 
the following type of tree will be useful.  
A \emph{spider} is a tree with exactly one vertex of degree bigger than~$2$.
(In the literature, spiders are also sometimes called \emph{star-like trees} or \emph{subdivisions of stars}.) 
The vertex of degree bigger than~$2$ is called the \emph{head} of the spider.
Let $T$ be a spider with head~$h$.
Each component of~$T \smallsetminus h$ is called a \emph{leg} of the spider.
Each leg is a path, and the number of legs of the spider is the degree of its head. 
The \emph{length} of a leg is its order, the number of vertices on it.

Given an integer~$d$ bigger than~$2$
and a list $\lambda_1$, \dots, $\lambda_d$ of positive integers, 
let $\spider_{\lambda_1, \dots, \lambda_d}$ denote the spider with a head of degree~$d$
and legs of length $\lambda_1$, \dots, $\lambda_d$.  
Thus the order of $\spider_{\lambda_1, \dots, \lambda_d}$ is $1 + \sum_i \lambda_i$. 
For example, 
the tree in Figure~\ref{fig:counter-example} is the spider~$\spider_{2, 2, 1}$, 
and the three trees in the first row of Figure~\ref{fig:all six vertices} are the spiders~$\spider_{3, 1, 1}$, $\spider_{2, 1, 1, 1}$, and~$\spider_{1, 1, 1, 1, 1}$.

The next lemma will use paths and spiders to characterize trees with two or three leaves.
We will use the following indicator notation:
$\indicator{\mathrm{true}} = 1$ and $\indicator{\mathrm{false}} = 0$.

\begin{lemma} \label{lemma:2-3 leaves} \
\begin{enumerate}[(a)]

\item 
If $T$ is a tree such that $\ell_1(T) = 2$, 
then $T$ is a path.

\item 
If $T$ is the path~$P_n$ and $1 \le k \le n - 1$,
then $\ell_k(P_n) = 2$.

\item  
If $T$ is a tree such that $\ell_1(T) = 3$, 
then $T$ is a spider with three legs.

\item  
If $T$ is the spider~$\spider_{\lambda_1, \dots, \lambda_d}$ 
and $1 \le k \le \sum_{i = 1}^d \lambda_i$,
then 
\[
  \ell_k(T) = \sum_{i = 1}^d \indicator{k \le \lambda_i} 
	              + \sum_{i = 1}^d \indicator{k \ge n - \lambda_i} \, .
\]	

\item
If $T$ is a tree such that $\ell_1(T) = 3$, $\ell_2(T) = 3$, and $\ell_3(T) = 0$, 
then $T$ is the spider~$\spider_{2, 2, 2}$, 
shown in Figure~\ref{fig:3-3-0}.
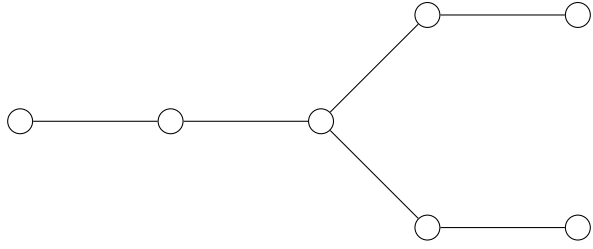
\begin{figure}
\begin{center}
\begin{tikzpicture}[node distance = 2cm, v/.style = {circle, draw}]
\node[v] (1)                {};
\node[v] (2) [right of = 1] {};
\node[v] (3) [right of = 2] {};
\draw (1) -- (2) -- (3);
\node[v] (4) [above right of = 3] {};
\node[v] (5) [right of = 4] {};
\draw (3) -- (4) -- (5);
\node[v] (6) [below right of = 3] {};
\node[v] (7) [right of = 6] {};
\draw (3) -- (6) -- (7);
\end{tikzpicture}
\end{center}
\caption{The unique tree for which $\ell_1 = 3$, $\ell_2 = 3$, and $\ell_3 = 0$}
\label{fig:3-3-0}
\end{figure}

\end{enumerate}
\end{lemma}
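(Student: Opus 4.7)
The plan is to reduce the problem to an elementary inequality on the leg lengths of a three-legged spider using the characterization from part (c) and the formula from part (d).

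First, since $\ell_1(T) = 3$, part (c) tells us that $T$ is a spider with three legs. So we may write $T = S_{\lambda_1, \lambda_2, \lambda_3}$ for positive integers $\lambda_1, \lambda_2, \lambda_3$, and $n = |T| = 1 + \lambda_1 + \lambda_2 + \lambda_3 \geq 4$. The task is to show $\lambda_1 = \lambda_2 = \lambda_3 = 2$.

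Next, I would plug $k=2$ and $k=3$ into the formula from part (d). A key simplification is that, for a three-legged spider, the condition $k \geq n - \lambda_i$ for $k \in \{2,3\}$ is essentially never satisfied: it says $\lambda_j + \lambda_k \leq k - 1 \leq 2$ for the other two legs $j,k$, which combined with $\lambda_j, \lambda_k \geq 1$ leaves only the degenerate possibility $k=3$ with $\lambda_j = \lambda_k = 1$. So the formula reduces (for generic $\lambda$'s) to
\[
  \ell_2(T) = \#\{i : \lambda_i \geq 2\}, \qquad
  \ell_3(T) = \#\{i : \lambda_i \geq 3\} + \#\{i : \lambda_j = \lambda_k = 1 \text{ for } \{j,k\} = \{1,2,3\}\smallsetminus\{i\}\}.
\]

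Now $\ell_2(T) = 3$ forces every $\lambda_i \geq 2$, in particular no pair of legs has lengths $(1,1)$. Thus the second term in $\ell_3(T)$ vanishes, and $\ell_3(T) = 0$ forces every $\lambda_i \leq 2$. Combining these two pinches gives $\lambda_1 = \lambda_2 = \lambda_3 = 2$, so $T = S_{2,2,2}$.

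There is no real obstacle here; the only point requiring a moment of care is verifying that the ``second sum'' in the formula from part (d) contributes nothing under our hypotheses, so that the identities $\ell_2(T)=3$ and $\ell_3(T)=0$ translate directly into the two inequalities $\lambda_i \geq 2$ and $\lambda_i \leq 2$ for every $i$.
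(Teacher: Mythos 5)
Your argument covers only part (e) of the lemma; parts (a)--(d) are invoked as known facts but never proved, so as a proof of the full five-part statement this is incomplete. In particular, your reduction leans entirely on the characterization in (c) (a tree with three leaves is a three-legged spider) and on the limb-number formula in (d), both of which are themselves assertions of the lemma being proved. The paper establishes (a) and (c) from the degree-sum inequality $2(n-1) = \sum_v \deg(v) \ge \ell_1(T)\cdot 1 + \Delta + 2(n - \ell_1(T) - 1)$, establishes (b) by exhibiting the two limbs of each order on a path explicitly, and establishes (d) by classifying the components of $T \smallsetminus v$ according to whether $v$ is the head of the spider or lies on a leg; you would need to supply arguments of this kind to complete the proof.

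For part (e) itself, your argument is correct and takes a mildly different (and arguably cleaner) route than the paper's. The paper uses $\ell_3(T) = 0$ with (d) only to conclude $\lambda_i \le 2$ for all $i$, and then finishes by checking the four remaining spiders $\spider_{1,1,1}$, $\spider_{2,1,1}$, $\spider_{2,2,1}$, $\spider_{2,2,2}$ by hand against the condition $\ell_2(T) = 3$. You instead observe that for $k \in \{2,3\}$ the second sum in (d) can contribute only in the degenerate case $k = 3$ with the other two legs of length $1$, so that $\ell_2(T) = 3$ forces every $\lambda_i \ge 2$ (which kills that degenerate case) and $\ell_3(T) = 0$ then forces every $\lambda_i \le 2$; this pinches $\lambda_i = 2$ with no case analysis. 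The supporting computation --- that $k \ge n - \lambda_i$ is equivalent to $\lambda_j + \lambda_k \le k - 1$ for the other two legs --- is handled correctly, so once (c) and (d) are in place your part (e) stands.
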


\begin{proof} \
\begin{enumerate}[(a)]

\item From Lemma~\ref{lemma:ell}(a), $\ell_1(T)$ is the number of leaves of $T$.  It is a standard exercise~\cite[Ex.~2.1.18]{West} that the number of leaves of a tree is at least its maximum degree $\Delta$, since $2(n - 1) = \sum_v d(v) \geq \ell_1(T) \cdot 1 + 1 \cdot \Delta + 2 \cdot (n - \ell_1(T) - 1)$.  Thus if $\ell_1(T) = 2$, the tree $T$ has maximum degree at most $2$, i.e., it is a path.

\item Labelling the path $v_1 \sim v_2 \sim \cdots \sim v_n$ and using the indexing from the proof of Lemma~\ref{lemma:ell}(c), we have for $k = 1, \ldots, n - 1$ that $L_k(T) = \{ (v_{k + 1}, T_{v_{k + 1}, v_k}), (v_{n - k}, T_{v_{n - k}, v_{n - k + 1}})\}$.

\item
If $T$ has three leaves, then it is not a path, and so some vertex has degree at least $3$.  By the argument in part (a), in fact the maximum degree must be exactly $3$.  Refining the argument in part (a), if we have equality $\Delta = \ell_1(T)$ then it must be the case that all vertices other than the leaves and a single vertex of degree $\Delta$ have degree $2$,  and so the graph is a spider with $\Delta$ legs \cite[Ex.~2.1.59(a)]{West}.

\item Deleting the vertices of the leg of length $\lambda_i$ contributes $\indicator{k < \lambda_i} + \indicator{k \geq n - \lambda_i}$, with the first term counting the contribution of the paths that do not contain the head and the second term counting the subtrees containing the head; deleting the head contributes $\sum_{i} \indicator{k = \lambda_i}$.  Summing up over all legs gives the result.

\item
Let $T$ be a tree such that $\ell_1(T) = \ell_2(T) = 3$ and $\ell_3(T) = 0$.  By part (c), since $\ell_1(T) = 3$, $T = \spider_{\lambda_1, \lambda_2, \lambda_3}$ is a spider with three legs.  Since $\ell_3(T) = 0$, we have by part (d) that $\lambda_i < 3$ (because the first sum must not have any positive terms).  Finally one easily checks by hand that among the remaining possibilities $\spider_{1, 1, 1}$, $\spider_{2, 1, 1}$, $\spider_{2, 2, 1}$, and $\spider_{2, 2, 2}$, only the last has $\ell_2(T) = 3$.
\qedhere

\end{enumerate}
\end{proof}

\section{Two players with optimal strategy}
\label{sec:strategic}

In this section, 
we consider the tree search game when both players are trying to maximize their probability of winning.  
The tree search game is a finite, two-player, zero-sum, sequential game,
and so both players have an optimal deterministic strategy.
For a tree~$T$, 
let $\PP(T)$ be the probability that the first player wins on~$T$
when both players play with an optimal strategy.
Our first theorem exactly computes this win probability for every tree.

\begin{theorem}
If $T$ is a tree on $n$ vertices, then
\[
\PP(T) = \frac{1}{2} + \frac{1}{2n} \cdot \indicator{n \textrm{ is odd}}   \, ,
\]
where $\indicator{n \textrm{ is odd}}$ denotes the indicator function for the event that $n$ is odd.
\end{theorem}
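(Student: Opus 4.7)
The plan is to proceed by strong induction on $n = \order{T}$. The base case $n = 1$ is immediate: Player~1 wins with certainty, matching $\tfrac{1}{2} + \tfrac{1}{2} = 1$. For the inductive step, I would first set up the backward-induction recurrence. If Player~1 opens with a guess at vertex~$v$, then with probability $1/n$ the target is $v$ and Player~1 wins outright, while with probability $\order{T'}/n$ the target lies in a particular component~$T'$ of $T \smallsetminus v$; in the resulting sub-game on~$T'$, Player~2 moves first, so by the symmetry of the game's definition Player~1's win probability from that position is $1 - \PP(T')$. Maximizing over~$v$ and collecting terms gives
\[
\PP(T) \;=\; 1 \;-\; \frac{1}{n} \min_{v \in V(T)} \sum_{T' \text{ a component of } T \smallsetminus v} \order{T'} \cdot \PP(T').
\]

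Next, I would apply the inductive hypothesis to each subtree~$T'$ (which has order strictly less than~$n$). A direct calculation gives $\order{T'} \cdot \PP(T') = \tfrac{1}{2}\order{T'} + \tfrac{1}{2}\indicator{\order{T'} \text{ odd}}$, and summing over components of $T \smallsetminus v$ (whose orders sum to $n - 1$) yields
\[
\sum_{T'} \order{T'} \cdot \PP(T') \;=\; \tfrac{n - 1}{2} + \tfrac{1}{2} o(v),
\]
where $o(v)$ denotes the number of odd-order components of $T \smallsetminus v$. Substituting back, $\PP(T) = \tfrac{n+1}{2n} - \tfrac{1}{2n}\min_v o(v)$, and the theorem reduces to showing $\min_v o(v) = 1$ when $n$ is even and $\min_v o(v) = 0$ when $n$ is odd.

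For the even case, any leaf~$v$ has $T \smallsetminus v$ equal to a single subtree of odd order $n - 1$, so $o(v) = 1$; one cannot do better because the component sizes sum to the odd number $n - 1$, forcing $o(v)$ itself to be odd, hence at least~$1$. For the odd case, I would orient each edge $e = \{u, w\}$ of~$T$ toward the endpoint on whose side of~$e$ the component of $T \smallsetminus e$ has odd order; since $n$ is odd, the two sides of each edge have opposite parities, so exactly one choice is odd and the orientation is well-defined. By construction, $o(v)$ is the out-degree of~$v$ in this orientation, and since the underlying graph is a tree, the orientation is automatically acyclic. Every finite DAG has a sink, i.e.\ a vertex~$v$ with $o(v) = 0$, which is what we need.

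The main obstacle is setting up the recurrence correctly: justifying from the hypothesis of optimal play that one can take a $\min$ over first-move choices, and carefully tracking the role reversal so that Player~1's win probability in the sub-game is $1 - \PP(T')$ rather than $\PP(T')$. Once the algebra collapses to a statement about $\min_v o(v)$, the even case is dispatched by an arbitrary leaf, and the crux of the odd case is the edge-orientation observation, which reduces the existence of a ``good'' vertex to the fact that an acyclic orientation of a tree must have a sink.
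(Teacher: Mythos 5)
Your proposal is correct and follows essentially the same route as the paper: the same backward-induction recurrence, the same reduction to minimizing the number of odd-order components of $T \smallsetminus v$, and the same parity argument for the even case. The only difference is cosmetic: for $n$ odd, your acyclic-orientation/sink argument is valid but unnecessary, since a leaf $v$ already gives a single component of even order $n - 1$ and hence $o(v) = 0$, which is what the paper observes.
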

\begin{proof}
The result is easy to verify for $n = 1$.  Let $T$ be a tree on $n > 1$ vertices, and suppose the result is true for all trees on fewer than $n$~vertices.  For a vertex~$v$ of~$T$, let $\PP(T, v)$ be the probability that the first player wins on~$T$ if their first guess is~$v$, 
so $\PP(T) = \max_{v \in V(T)} \PP(T, v)$.  

With probability $\frac{1}{n}$, the first guess is correct.  Otherwise, the target vertex lies in one of the $k = \deg(v)$ components $T_{1}$, \dots, $T_{k}$ of the forest~$T \smallsetminus v$, and the probability that it lies in component~$T_i$ is~$\frac{\order{T_i}}{n}$.  Moreover, in this case, the second player wins with probability $\PP(T_i)$, and so\ the first player wins with probability $1 - \PP(T_i)$.  Thus
\[
\PP(T, v) 
  = \frac{1}{n} + \sum_{i = 1}^k \frac{|T_i|}{n} \cdot \bigl( 1 - \PP(T_i) \bigr) 
	= 1 - \frac{1}{n}\sum_{i = 1}^k |T_i| \cdot \PP(T_i).
\]
Let $m$ be the number of $T_i$ such that $|T_i|$ is odd.  
By the induction hypothesis, 
\[
\PP(T, v) 
  = 1 - \frac{1}{n} \Bigl( \frac{m}{2} + \sum_{i = 1}^k |T_i| \cdot \frac{1}{2} \Bigr) 
	= 1 - \frac{m}{2n} - \frac{n - 1}{2n} 
	= \frac{1}{2} + \frac{1 - m}{2n} \, .
\]
This quantity is maximized when $m$ is minimized.  If $n = |T|$ is even, then after removing $v$ there are an odd number of vertices and so $m \geq 1$; moreover, we can always achieve the minimum $m = 1$ by taking $v$ to be a leaf (among possibly other options), and so $\PP(T) = \frac{1}{2}$ in this case.  If $n$ is odd, then we can always achieve the minimum possible value $m = 0$ by taking $v$ to be a leaf (among possibly other options), and so $\PP(T) = \frac{1}{2} + \frac{1}{2n}$ in this case.  By induction, the result is valid for all trees.
\end{proof}

\begin{remark}
It follows from the preceding proof that the set of optimal moves for the first player is precisely the set of vertices $v$ such that $T \smallsetminus v$ has the minimum number of odd-order components (namely, $0$ if $n$ is odd and $1$ if $n$ is even).  In particular, it is always optimal to choose a leaf.  In the motivating context of binary search on $\{1, \dots, n\}$, we have that all first moves are equally strong if $n$ is even, while the first player should guess any odd number when $n$ is odd.
\end{remark}

\section{Optimal strategy versus a random player}
\label{sec:mixed}

In this section,
we consider the tree search game when 
one player is trying to maximize their probability of winning
while the other plays uniformly at random.
This version of the game is effectively a one-player game,
since only one player is playing strategically.
For a tree~$T$,
let $\PP(T)$ be the probability that a player playing with an optimal strategy wins 
when they have the first move on~$T$ 
against an opponent who chooses vertices uniformly at random.
Similarly, 
let $\QQ(T)$ be the probability that a player playing with an optimal strategy wins 
when they have the second move on~$T$ 
against an opponent who chooses vertices uniformly at random.  

It is easy to compute that 
$\PP(S_1) = 1$, $\QQ(S_1) = 0$, and $\PP(S_2) = \QQ(S_2) = \frac{1}{2}$.  
For the tree on three vertices, 
a player wins with probability~$\frac{1}{3}$ if they choose the middle node 
(if the choice is incorrect, then the opponent has only one vertex to choose from) 
and probability $\frac{1}{3} + \frac{2}{3} \cdot \frac{1}{2} = \frac{2}{3}$ 
if they choose a leaf, 
so $\PP(S_3) = \frac{2}{3}$ and~$\QQ(S_3) = 1 - \frac{1}{3} \left(\frac{2}{3} + \frac{1}{3} + \frac{2}{3}\right) = \frac{4}{9}$.

For a vertex~$v$ of tree~$T$, 
let $\PP(T, v)$ be the probability that the optimal player wins when playing first on~$T$, 
provided that the optimal player's first move is~$v$.
Since the first player plays optimally, 
\[
  \PP(T) = \max_{v \in V(T)} \PP(T, v) .
\]
Since the target vertex is selected uniformly at random,
\[
  \PP(T, v) 
	  = \frac{1}{n} + \sum_{T'} \frac{\order{T'}}{n} \QQ(T') 
		= \frac{1}{n} + \frac{1}{n} \sum_{T'} \order{T'} \QQ(T') ,
\]
where $T'$ ranges over the components of the forest~$T \smallsetminus v$.
Similarly, 
let $\QQ(T, v)$ be the conditional probability that the optimal player wins 
when playing second on~$T$, 
given that the random player's first move is~$v$.
Since the first move is chosen uniformly at random, 
\[
  \QQ(T) = \frac{1}{n} \sum_{v \in V(T)} \QQ(T, v) .
\]
Moreover, since the target vertex is selected uniformly at random,
\[
  \QQ(T, v)  
	  = \sum_{T'} \frac{\order{T'}}{n} \PP(T') 
		= \frac{1}{n} \sum_{T'} \order{T'} \PP(T') ,
\]
where $T'$ ranges over the components of~$T \smallsetminus v$.
Combining the last two formulas gives
\[
  \QQ(T) = \frac{1}{n^2} \sum_{v \in V(T)} \sum_{T'} \order{T'} \PP(T') .
\]

In the following two subsections, 
we give exact values for the win probabilities $\PP$ and $\QQ$ 
for star graphs and for path graphs,
respectively.  
In a third subsection, we give bounds for general trees. 

\subsection{Stars}

The next theorem exactly computes the probabilities of winning for every star graph.

\begin{theorem}  \label{thm:semi-random stars}
For every positive integer $n$,
\begin{align*}
\PP(S_n) & = \frac{2}{3} - \frac{1}{3n} +  \frac{2}{3n} \cdot \frac{\binom{n - 1}{(n - 1)/2}}{2^{n - 1}} \cdot \indicator{n \textrm{ is odd}}\\
\intertext{and}
\QQ(S_n) & = \frac{2}{3} - \frac{2}{3n} +  \frac{2}{3n} \cdot \frac{\binom{n }{n/2}}{2^n} \cdot \indicator{n \textrm{ is even}}  \, .
\end{align*}
\end{theorem}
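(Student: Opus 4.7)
My plan is to prove both formulas simultaneously by induction on $n$, exploiting the very simple recursive structure of $S_n$: removing any leaf returns $S_{n-1}$, while removing the center returns a forest of $n-1$ singletons (on each of which $\PP = 1$ and $\QQ = 0$). The base cases $n = 1, 2, 3$ are already verified in the discussion preceding the theorem.

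The induction is powered by two coupled recurrences. For $\PP(S_n)$, the optimal first mover on $S_n$ has only two essentially different options: guess the center, which yields win probability $\frac{1}{n}$ (since $\QQ(S_1) = 0$), or guess a leaf, which yields $\frac{1}{n} + \frac{n-1}{n}\,\QQ(S_{n-1})$. Since $\QQ(S_{n-1}) \geq 0$, the leaf guess is weakly optimal, so
\[
  \PP(S_n) = \frac{1}{n} + \frac{n-1}{n}\,\QQ(S_{n-1}).
\]
Averaging the random first mover's $n$ equiprobable choices (one center and $n-1$ leaves), and using $\PP(S_1) = 1$ in the center branch, similarly yields
\[
  \QQ(S_n) = \frac{n-1}{n^2} + \frac{(n-1)^2}{n^2}\,\PP(S_{n-1}).
\]

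The inductive step is then pure substitution, split by parity of $n$. Since exactly one of $n$, $n-1$ is odd, at most one of the two indicator terms involved (one in the source formula, one in the target) is active, making the four sub-cases (parity of $n$ crossed with $\PP$ versus $\QQ$) clean. The two sub-cases in which both indicators vanish collapse to one-line verifications that the non-indicator parts $\frac{2}{3} - \frac{1}{3n}$ and $\frac{2}{3} - \frac{2}{3n}$ appear correctly. In the $\PP$-sub-case with $n$ odd (so $n-1$ is even), the binomial factor $\binom{n-1}{(n-1)/2}/2^{n-1}$ that $\QQ(S_{n-1})$ contributes is literally the same expression as the indicator factor in $\PP(S_n)$, so no combinatorial identity is required beyond matching coefficients. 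The last sub-case ($\QQ$ with $n = 2k$ even) does use the standard consecutive-central-binomial identity
\[
  \binom{2k}{k} = \frac{2(2k-1)}{k}\,\binom{2k-2}{k-1},
\]
which is exactly what is needed to convert the binomial factor carried by $\frac{(n-1)^2}{n^2}\,\PP(S_{n-1}) = \frac{(2k-1)^2}{(2k)^2}\,\PP(S_{2k-1})$ into the target's $\binom{n}{n/2}/2^n$.

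The main obstacle is bookkeeping across the four sub-cases; the arithmetic in each is elementary, and the only nontrivial combinatorial input is the central-binomial identity above. One should also take care to justify, in the $\PP$ recurrence, that the leaf move really is (weakly) optimal; but this follows at once from $\QQ(S_{n-1}) \geq 0$, so no deeper strategic argument is needed.
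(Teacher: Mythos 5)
Your proposal is correct and follows essentially the same route as the paper: establish the two recurrences $\PP(S_n) = \frac{1}{n} + \frac{n-1}{n}\QQ(S_{n-1})$ and $\QQ(S_n) = \frac{n-1}{n^2} + \frac{(n-1)^2}{n^2}\PP(S_{n-1})$ by noting that a leaf guess (weakly) dominates the center guess and by conditioning on the random player's first move, then verify the closed forms by induction. The paper dismisses the verification as ``straightforward''; your parity case analysis and the identity $\binom{2k}{k} = \frac{2(2k-1)}{k}\binom{2k-2}{k-1}$ simply make that step explicit.
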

\begin{proof}
For $n = 1$ the proposed formulas give $\PP(S_1) = \frac{2}{3} - \frac{1}{3} + \frac{2}{3} = 1$ and $\QQ(S_1) = \frac{2}{3} - \frac{2}{3} = 0$, and for $n = 2$ they give $\PP(S_2) = \frac{2}{3} - \frac{1}{6} = \frac{1}{2}$ and $\QQ(S_2) = \frac{2}{3} - \frac{1}{3} + \frac{1}{3} \cdot \frac{1}{2} = \frac{1}{2}$, as needed.

For $n \geq 3$, 
the strategic player may choose either a leaf~$\ell$ or the center vertex~$c$.  
One has
\[
  \PP(S_n, \ell) 
	  = \frac{1}{n}\cdot 1 + \frac{n - 1}{n} \cdot \QQ(S_{n - 1}) 
		> \frac{1}{n} 
		= \PP(S_n, c),
\]
so it is always optimal to choose a leaf and
\[
  \PP(S_n) = \frac{1}{n} + \frac{n - 1}{n} \QQ(S_{n - 1}).
\]
Similarly, 
considering separately whether the opponent playing randomly chooses the center or a leaf, one has
\[
  \QQ(S_n) 
	  = \underbrace{\frac{1}{n} \cdot \frac{n - 1}{n} \cdot 1}_{\textrm{chooses center}} 
		    + \underbrace{\frac{n - 1}{n} \cdot \frac{n - 1}{n} \cdot 
				    \PP(S_{n - 1})}_{\textrm{chooses leaf}}.
\]
It is straightforward to verify that the given formulas satisfy these recurrence equations, so the result holds by induction.
\end{proof}

One may immediately compute the asymptotic win probabilities for stars.

\begin{corollary}
One has 
\[
  \lim_{n \to \infty} \PP(S_n) = \lim_{n \to \infty} \QQ(S_n) = \frac{2}{3} \, .
\]
\end{corollary}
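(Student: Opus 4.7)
The plan is to apply Theorem~\ref{thm:semi-random stars} directly and take limits term by term. Both formulas have the shape $\frac{2}{3}$ plus two corrections of order $1/n$, so the corollary will follow once I show that each correction tends to $0$ as $n \to \infty$.

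The first correction in each formula is simply $-\frac{1}{3n}$ or $-\frac{2}{3n}$, which visibly vanishes. The second correction has the form $\frac{2}{3n} \cdot \frac{\binom{m}{\lfloor m/2 \rfloor}}{2^{m}} \cdot \indicator{\cdots}$ for $m \in \{n-1, n\}$. The key observation is that $\binom{m}{\lfloor m/2 \rfloor}/2^{m}$ is a probability (the chance that $m$ fair coin flips yield exactly $\lfloor m/2 \rfloor$ heads), hence is bounded above by $1$. Therefore the whole correction is at most $\frac{2}{3n}$ in absolute value and also tends to $0$. Combining, $\lim_{n \to \infty} \PP(S_n) = \lim_{n \to \infty} \QQ(S_n) = \frac{2}{3}$.

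There is no real obstacle here; the trivial bound on the central binomial coefficient is enough. If a sharper rate were wanted, Stirling's approximation gives $\binom{m}{\lfloor m/2 \rfloor}/2^{m} = \Theta(1/\sqrt{m})$, which would upgrade both correction terms to order $n^{-3/2}$, but this refinement is not needed for the stated corollary.
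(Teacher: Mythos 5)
Your proof is correct and matches the paper's (implicit) argument: the paper states the corollary as an immediate consequence of Theorem~\ref{thm:semi-random stars}, and your observation that $\binom{m}{\lfloor m/2\rfloor}/2^m \le 1$ suffices to make both correction terms $O(1/n)$ is exactly the intended reasoning.
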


\subsection{Paths}

In this subsection,
we exactly compute the win probabilities $\PP$ and $\QQ$ for every path graph.
Given a positive integer~$n$, define~$p_n$ and~$q_n$ by
\begin{align*}
  p_n &= \frac{1}{2} - \frac{1}{n} \cdot \frac{(-2)^n}{n!}
  		     + \frac{n + 2}{2n}  \sum_{j = 0}^{n} \frac{(-2)^j}{j!} \, ,  \\
	q_n &= \frac{n-1}{2n} - \frac{1}{n} \cdot \frac{(-2)^n}{n!}
	         + \frac{n + 3}{2n} \sum_{j = 0}^{n} \frac{(-2)^j}{j!} \, .
\end{align*}
We will show that $\PP(P_n) = p_n$ is 
the probability that the player playing strategically beats 
an opponent playing randomly on a path with $n$~vertices, 
provided that the optimal player goes first.
Similarly, we will show that $\QQ(P_n) = q_n$ is 
the probability that an optimal player beats a random opponent on a path with $n$~vertices, 
provided that the optimal player goes second.

Because the infinite series $\sum_{j = 0}^\infty \frac{(-2)^j}{j!}$ converges to $e^{-2}$,
the limits (as $n$ approaches infinity) of $p_n$ and $q_n$ are both $(1 + e^{-2}) / 2$.
For later use, we record the first seven values of $p_n$ and $q_n$ in Table~\ref{table:p's and q's}.
\begin{table}
\[
\begin{array}{c|ccccccc}
    n & 1 &   2 &   3 &    4 &     5 &     6 &       7  \\
  p_n & 1 & 1/2 & 2/3 & 7/12 &   3/5 & 53/90 &   37/63  \\
  q_n & 0 & 1/2 & 4/9 &  1/2 & 38/75 & 14/27 & 386/735        
\end{array}
\]
\caption{The first few values of $p_n$ and $q_n$}
\label{table:p's and q's}
\end{table}
For convenience, define $p_0$ and $q_0$ to be~$\frac{1}{2}$.

The following lemma shows how to express~$p_n$ in terms of~$q_{n - 1}$.

\begin{lemma} \label{lemma:path endpoint}
If $n$ is a positive integer, then
\[
  p_n = \frac{1}{n} + \frac{n - 1}{n} q_{n - 1} \, .
\]
\end{lemma}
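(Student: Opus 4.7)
The plan is a direct algebraic verification. Both sides of the claimed identity are built from the explicit closed forms for $p_n$ and $q_{n-1}$, so at this stage the statement is just a formal identity among those expressions, with no game-theoretic content yet.

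First I would substitute the definition of $q_{n-1}$ into the right-hand side and multiply by $\frac{n-1}{n}$. Each term of $q_{n-1}$ carries a $\frac{1}{n-1}$ factor in its denominator, so this multiplication clears those factors cleanly. Adding $\frac{1}{n}$, the leading constants collapse as $\frac{1}{n} + \frac{n-2}{2n} = \frac{1}{2}$, matching the leading constant of $p_n$, while the coefficient in front of the infinite-series term simplifies to $\frac{n+2}{2n}$, again matching $p_n$.

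The remaining discrepancy comes from two mismatches: the sum in $q_{n-1}$ runs only through $j = n-1$, while the sum in $p_n$ includes the extra term $j = n$; and the correction term carries $(-2)^{n-1}/(n-1)!$ rather than $(-2)^n/n!$. I would reconcile both using the elementary identity $\frac{(-2)^{n-1}}{(n-1)!} = -\frac{n}{2} \cdot \frac{(-2)^n}{n!}$, after which the net discrepancy factors as $\frac{(-2)^n}{n!}\bigl[-\frac{1}{n} - \frac{1}{2} + \frac{n+2}{2n}\bigr]$, and the bracket simplifies to $\frac{-2 - n + (n+2)}{2n} = 0$.

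I do not anticipate any real obstacle: the computation is routine bookkeeping, and the only thing to watch carefully is the sign when rewriting $(-2)^{n-1}/(n-1)!$ in terms of $(-2)^n/n!$. The boundary case $n = 1$ is handled trivially, since the coefficient $\frac{n-1}{n}$ vanishes and the convention $q_0 = \frac{1}{2}$ is invoked only vacuously; one then just checks $p_1 = 1$ directly from the closed form.
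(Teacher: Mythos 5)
Your proposal is correct and follows essentially the same route as the paper: a direct algebraic verification from the closed forms, using the identity $\frac{(-2)^{n-1}}{(n-1)!} = -\frac{n}{2}\cdot\frac{(-2)^n}{n!}$ to absorb the extra $j = n$ term of the sum (the paper runs the computation from $np_n$ toward $1 + (n-1)q_{n-1}$ rather than from the right-hand side toward $p_n$, but this is only a difference of direction). Your separate treatment of $n = 1$ matches the paper's as well.
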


\begin{proof}
The case~$n = 1$ is easy, so we may assume that~$n \ge 2$.  
Using the definitions of $p$ and~$q$, we have
\begin{align*}
  n p_n 
	  &= \frac{n}{2} - \frac{(-2)^n}{n!}  + \frac{n + 2}{2}  \sum_{j = 0}^{n} \frac{(-2)^j}{j!}  \\
		&= \frac{n}{2} + \frac{n}{2} \cdot \frac{(-2)^n}{n!} 
			+ \frac{n + 2}{2}  \sum_{j = 0}^{n - 1} \frac{(-2)^j}{j!}  \\		
		&= 1 + \frac{n - 2}{2} - \frac{(-2)^{n - 1}}{(n - 1)!} 
			+ \frac{n + 2}{2}  \sum_{j = 0}^{n - 1} \frac{(-2)^j}{j!} \\
		&= 1 + (n - 1) q_{n - 1} \, ,		
\end{align*}
as desired. 
\end{proof} 

Our next lemma expresses~$q_n$ as an average involving previous values of~$p$.

\begin{lemma}  \label{lemma:path average}
If $n$ is a positive integer, then
\[
  q_n 
	  = \frac{1}{n} \sum_{k = 1}^n \Bigl( \frac{k - 1}{n} p_{k - 1} + \frac{n - k}{n} p_{n - k} \Bigr) .
\]
\end{lemma}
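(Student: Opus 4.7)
The identity is a purely algebraic relation among the numbers $p_n$ and $q_n$ defined by the closed-form expressions above. My plan is to exploit the reflective symmetry of the sum, reduce the claim to a single one-step recurrence via induction, and then verify that recurrence directly from the definitions.

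First, I would observe that the substitution $k' = n + 1 - k$ yields
\[
\sum_{k = 1}^n \frac{n - k}{n} p_{n - k} = \sum_{k' = 1}^n \frac{k' - 1}{n} p_{k' - 1},
\]
so the two sums on the right-hand side of the lemma are equal. The claim thus reduces to the equivalent assertion
\[
n^2 q_n = 2 \sum_{j = 0}^{n - 1} j \, p_j.
\]

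Next, I would proceed by induction on $n$. The base case $n = 1$ is immediate, since $q_1 = 0$ and every term of the sum vanishes. For the inductive step, subtracting the identity at $n - 1$ (which holds by hypothesis) reduces the task to the single one-step recurrence
\[
n^2 q_n - (n - 1)^2 q_{n - 1} = 2(n - 1) p_{n - 1}
\]
for $n \ge 2$.

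Finally, I would verify this recurrence by substituting the closed-form definitions of $p_{n-1}$, $q_{n-1}$, and $q_n$. Writing $E_m := \sum_{j=0}^m \frac{(-2)^j}{j!}$ and using the step relation $E_n - E_{n-1} = \frac{(-2)^n}{n!}$ to align the various partial exponential sums, the identity separates into three elementary polynomial identities in $n$, obtained by matching the coefficients of $E_{n-1}$, of $\frac{(-2)^{n-1}}{(n-1)!}$, and of the pure rational part. I expect the main obstacle to be purely the careful bookkeeping in this final algebraic verification; there is no conceptual subtlety once the reduction to the one-step recurrence has been made. (As a sanity check, Lemma~\ref{lemma:path endpoint} gives the alternative form $n^2 q_n - (n-1)^2 q_{n-1} = 2 + 2(n-2) q_{n-2}$, which can be used to cross-check the arithmetic against the first few values in Table~\ref{table:p's and q's}.)
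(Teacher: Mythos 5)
Your proposal is correct and follows essentially the same route as the paper: both reduce the identity (using the symmetry $k \mapsto n+1-k$) to $n^2 q_n = 2\sum_{j=1}^{n-1} j\,p_j$, telescope to the one-step difference equation $n^2 q_n - (n-1)^2 q_{n-1} = 2(n-1)p_{n-1}$, and verify that equation by substituting the closed forms. The only part you leave as a sketch — the final coefficient-matching against the partial exponential sums — is precisely the computation the paper carries out, and it goes through as you expect.
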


\begin{proof}
The case $n = 1$ is easy.  
For $n \geq 2$, we can simplify the desired equation as follows:
\[
  n^2 q_n = 2 \sum_{k = 1}^{n - 1} k p_k \, .
\]
By telescoping sums, it is sufficient to verify the difference equation
\[
  (k + 1)^2 q_{k + 1} - k^2 q_k = 2 k p_k \, .
\]
Plugging in the definition of $q_k$ and $q_{k + 1}$ 
and then using the definition of~$p_k$, we have
\begin{align*}
  (k + 1)^2 q_{k + 1} - k^2 q_k 
    &= k + \frac{k + 4}{2} \cdot \frac{(-2)^{k + 1}}{k!} - \frac{(-2)^{k + 1}}{k!}
		+ k \cdot \frac{(-2)^k}{k!} + (k + 2) \sum_{j = 0}^k \frac{(-2)^j}{j!} \\
	&= k  - 2 \cdot \frac{(-2)^k}{k!} + (k + 2) \sum_{j = 0}^k \frac{(-2)^j}{j!} \\
	&= 2 k p_k \, ,			
\end{align*}
which is the difference equation.
\end{proof} 

The next lemma gives good upper and lower bounds on~$q_n$.  

\begin{lemma}  \label{lemma:path estimate}
If $n$ is a positive integer, then
\[
  n q_n 
		\le \frac{1 + e^{-2}}{2} n - \frac{1}{8} \, .
\]
Furthermore, if $n \ge 3$, then
\[
  \frac{1 + e^{-2}}{2} n - \frac{3}{8} 
	  \le n q_n 
		\le \frac{1 + e^{-2}}{2} n - \frac{1}{4} \, .
\]
\end{lemma}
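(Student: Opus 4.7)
The plan is to reduce the problem to analysis of the tail of the series for $e^{-2}$. Writing $a_j = (-2)^j/j!$ and $R_n = \sum_{j \ge n+1} a_j = e^{-2} - \sum_{j=0}^n a_j$, substitution into the closed form for $q_n$ and simplification yields
\[
n q_n - \tfrac{n(1 + e^{-2})}{2} \;=\; C - G_n,
\quad\text{where}\quad
C := -\tfrac{1}{2} + \tfrac{3}{2} e^{-2}
\quad\text{and}\quad
G_n := a_n + \tfrac{n+3}{2} R_n.
\]
Numerically $C \approx -0.297$ lies inside the target interval $[-3/8,-1/4]$, so the lemma amounts to suitable bounds on the remainder $G_n$.

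Next I would bound $G_n$ via the alternating series estimate applied to $R_n$. Since $\abs{a_{j+1}}/\abs{a_j} = 2/(j+1) < 1$ for $j \ge 2$, the tail $R_n$ is alternating with terms decreasing in absolute value as soon as $n \ge 1$, so $R_n$ lies between its first two partial sums $a_{n+1}$ and $a_{n+1} + a_{n+2}$. Direct computation gives
\[
a_n + \tfrac{n+3}{2} a_{n+1} = -\tfrac{2 a_n}{n+1},
\qquad
a_n + \tfrac{n+3}{2} (a_{n+1} + a_{n+2}) = \tfrac{2 a_n}{(n+1)(n+2)},
\]
so $G_n$ lies between these two values, and in particular $\abs{G_n} \le \tfrac{2 \abs{a_n}}{n+1} = \tfrac{2^{n+1}}{(n+1)!}$.

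For $n \ge 6$ this already gives $\abs{G_n} \le 128/5040 < 1/32$; combined with elementary numerical bounds on $e^{-2}$ (for instance $0.135 < e^{-2} < 0.136$), this places $C - G_n$ inside $[-3/8, -1/4]$, establishing the two-sided estimate for all $n \ge 6$. For $n \in \{3, 4, 5\}$ the analytic bound is too loose, so I would verify the inequalities directly using the rational values of $q_n$ from Table~\ref{table:p's and q's} together with the same sharp bounds on $e^{-2}$. The weaker bound $n q_n \le \tfrac{n(1+e^{-2})}{2} - \tfrac{1}{8}$ for $n \ge 1$ then follows from the stronger bound when $n \ge 3$, and the remaining cases $n = 1, 2$ reduce to the trivial $-(1+e^{-2})/2 \le -1/8$ and to $-e^{-2} \le -1/8$, the latter holding because $e^2 < 8$.

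The main obstacle I anticipate is tightness: the uniform analytic bound on $G_n$ only becomes small enough for the tight two-sided estimate at $n \ge 6$, so the cases $n = 3, 4, 5$ must be verified by hand with reasonably sharp two-sided numerical estimates for $e^{-2}$, and similarly the $n=2$ instance of the weaker inequality, being close to an equality, requires the explicit bound $e^{-2} > 1/8$.
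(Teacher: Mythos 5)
Your proof is correct and follows essentially the same route as the paper's: both isolate the constant $-\tfrac{1-3e^{-2}}{2}$ from the closed form for $nq_n$, bound the remaining tail term for large $n$, and dispose of the small cases numerically via Table~\ref{table:p's and q's}. The only difference is that your alternating-series bracketing of $R_n$ exploits the near-cancellation $a_n + \tfrac{n+3}{2}a_{n+1} = -\tfrac{2a_n}{n+1}$ and so avoids the extra factor of $n+2$ in the paper's Taylor-remainder estimate, letting the analytic argument start at $n \ge 6$ rather than $n \ge 8$.
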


\begin{proof}
Using Table~\ref{table:p's and q's},
we can check the cases $n \le 7$, 
so we may assume that $n \ge 8$.
By Taylor's theorem applied to the exponential function, 
\[
  \abs[\Big]{\sum_{j = 0}^{n} \frac{(-2)^j}{j!} - e^{-2}} 
	  \le \frac{2^{n + 1}}{(n + 1)!} \, .  
\]
Plugging this bound into the definition of~$q_n$ gives
\[
  \abs[\Big]{n q_n - \frac{1 + e^{-2}}{2} n + \frac{1 - 3 e^{-2}}{2}} 
	  \le \frac{n + 3}{2} \cdot \frac{2^{n + 1}}{(n + 1)!} + \frac{2^n}{n!}
		= (n + 2) \frac{2^{n + 1}}{(n + 1)!} \, .
\] 
Since $n \ge 8$, 
the right side $(n + 2) 2^{n + 1} / (n + 1)!$ is at most~$\frac{1}{25}$.
Hence 
\[
  \frac{1 + e^{-2}}{2} n - \frac{1 - 3 e^{-2}}{2} - \frac{1}{25}
	  \le n q_n
		\le \frac{1 + e^{-2}}{2} n - \frac{1 - 3 e^{-2}}{2} + \frac{1}{25} \, .
\] 
Since $(1 - 3e^{-2}) / 2$ is between $0.29$ and~$0.3$, we are done.
\end{proof}

Since $q_2 > q_3$,
the sequence~$\{ q_n \}_{n \ge 1}$ is not increasing,
though the sequence~$\{ q_n \}_{n \ge 3}$ is strictly increasing.  
Our next lemma shows that the sequence~$\{ q_n \}_{n \ge 1}$ satisfies an increasing-like property,
namely, the sequence~$\{ n q_n \}_{n \ge 1}$ is superadditive.

\begin{lemma}  \label{lemma:superadditive paths}
If $m$ and $n$ are nonnegative integers, then
\[
  m q_m + n q_n \le (m + n) q_{m + n} \, .
\]
\end{lemma}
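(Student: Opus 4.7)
The plan is to leverage the sharp asymptotic estimates from Lemma~\ref{lemma:path estimate}. Set $c = (1 + e^{-2})/2$, so that the lemma gives $n q_n \le cn - \tfrac{1}{8}$ for all $n \ge 1$ and, crucially, the tighter sandwich $cn - \tfrac{3}{8} \le n q_n \le cn - \tfrac{1}{4}$ for $n \ge 3$. Without loss of generality I would assume $m \le n$, and split into cases according to $m$.

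The generic case is $m, n \ge 3$. Summing the two upper bounds gives $m q_m + n q_n \le c(m + n) - \tfrac{1}{2}$, while the lower bound applied to $m + n \ge 6$ gives $(m+n) q_{m+n} \ge c(m+n) - \tfrac{3}{8}$. Since $-\tfrac{1}{2} < -\tfrac{3}{8}$, the claimed inequality is immediate.

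The remaining work is to dispatch the cases $m \in \{0, 1, 2\}$ by a similar split. When $m = 0$, the convention $q_0 = \tfrac{1}{2}$ makes $m q_m = 0$ and the inequality is an equality. When $m = 1$, since $q_1 = 0$, the claim reduces to $n q_n \le (n+1) q_{n+1}$; the tiny cases $n \le 2$ are checked from Table~\ref{table:p's and q's}, and for $n \ge 3$ combining $n q_n \le cn - \tfrac{1}{4}$ with $(n+1) q_{n+1} \ge c(n+1) - \tfrac{3}{8}$ yields a gap of at least $c - \tfrac{1}{8} > 0$. When $m = 2$ the claim becomes $1 + n q_n \le (n+2) q_{n+2}$; the cases $n = 2, 3$ come from the table, and for $n \ge 3$ the sandwich reduces the claim to $2c \ge \tfrac{9}{8}$, equivalently $e^{-2} \ge \tfrac{1}{8}$, which holds since $e^2 < 8$.

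The main obstacle is just bookkeeping: the lower bound in Lemma~\ref{lemma:path estimate} only kicks in at argument $3$, so a handful of small cases have to be handled separately, and the numerical slack ($c > \tfrac{9}{16}$) has to be used once. A more direct attack manipulating the closed-form definition of $q_n$ seems to get tangled in alternating factorial sums, so the asymptotic-estimate route appears much cleaner.
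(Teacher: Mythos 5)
Your proof is correct and follows essentially the same route as the paper's: both rest entirely on the sandwich bounds of Lemma~\ref{lemma:path estimate} plus a table check of the smallest cases. The only difference is bookkeeping — the paper dispatches $m \in \{1,2\}$ with $n \ge 3$ uniformly via the bound $mq_m \le \tfrac{1+e^{-2}}{2}m - \tfrac18$ (valid for all $m \ge 1$), which makes the deficits sum to exactly $\tfrac38$ with no numerical slack needed, whereas you substitute the exact values $q_1 = 0$ and $q_2 = \tfrac12$ and invoke $e^2 < 8$ once; both work.
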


\begin{proof}
By symmetry, we may assume that~$m \le n$.
The case~$m = 0$ is trivial, so we may assume that~$m \ge 1$.
The cases~$n \le 2$ are verified in Table~\ref{table:small cases}.  
Hence we may further assume that~$n \ge 3$. 
\begin{table}
\[
\begin{array}{cccc}
  m & n & m q_m + n q_n & (m + n) q_{m + n} \\ \hline
	1 & 1 & 0             & 1                 \\
	1 & 2 & 1             & 4/3               \\
	2 & 2 & 2             & 2            
\end{array}
\]
\caption{Small cases in the proof of Lemma~\ref{lemma:superadditive paths}}
\label{table:small cases}
\end{table}
Using Lemma~\ref{lemma:path estimate} twice, we have
\[
  m q_m \le \frac{1 + e^{-2}}{2} m - \frac{1}{8}  \qquad \textrm{ and } \qquad
	n q_n \le \frac{1 + e^{-2}}{2} n - \frac{1}{4} \, .
\]
Adding these two inequalities and using Lemma~\ref{lemma:path estimate} again, we have
\[
  m q_m + n q_n 
		\le \frac{1 + e^{-2}}{2} (m + n) - \frac{3}{8}
		\le (m + n) q_{m + n} \, .  \qedhere
\] 
\end{proof}

Lemma~\ref{lemma:path endpoint} expressed~$p_n$ in terms of~$q_{n - 1}$.
The next lemma shows that $p_n$ can be expressed as a maximum involving all previous~$q$.

\begin{lemma}  \label{lemma:path max}
If $n$ is a positive integer, then
\[
  p_n 
	  = \max_{1 \le k \le n} 
		    \Bigl( \frac{1}{n} + \frac{k - 1}{n} q_{k - 1} + \frac{n - k}{n} q_{n - k} \Bigr) .
\]
\end{lemma}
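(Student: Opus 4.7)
The plan is to reduce the claim to the two preceding lemmas about the sequences $p$ and $q$, namely Lemma~\ref{lemma:path endpoint} (which computes $p_n$ in terms of $q_{n-1}$) and Lemma~\ref{lemma:superadditive paths} (the superadditivity of $\{n q_n\}$). The expression inside the maximum is symmetric under $k \mapsto n + 1 - k$, so it is natural to identify one endpoint of the range as the witness achieving the maximum, and then to argue that the interior values cannot exceed it.

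First I would evaluate at $k = 1$. Because the $(k-1)/n$ coefficient vanishes, the expression collapses to $\frac{1}{n} + \frac{n-1}{n} q_{n-1}$, which by Lemma~\ref{lemma:path endpoint} is exactly $p_n$. Thus the maximum is at least $p_n$, and in fact $p_n$ is attained in the max.

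For the reverse inequality, it suffices to show that for every $1 \le k \le n$,
\[
    \frac{1}{n} + \frac{k-1}{n} q_{k-1} + \frac{n-k}{n} q_{n-k}
        \le p_n
        = \frac{1}{n} + \frac{n-1}{n} q_{n-1}.
\]
Cancelling the shared $1/n$ and clearing denominators, this is equivalent to
\[
    (k - 1) q_{k-1} + (n - k) q_{n-k} \le (n - 1) q_{n-1}.
\]
Setting $m = k-1$ and $m' = n - k$, one has $m + m' = n - 1$, so the required inequality is precisely the case of Lemma~\ref{lemma:superadditive paths} with these two nonnegative indices. Combining this with the previous paragraph proves the lemma.

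I do not anticipate any real obstacle here: the hard work was already carried out in proving Lemma~\ref{lemma:superadditive paths}, which in turn relied on the two-sided estimates of Lemma~\ref{lemma:path estimate}. Given those, the present lemma is a two-step consequence: one application of Lemma~\ref{lemma:path endpoint} to identify the optimal $k$, and one application of superadditivity to bound every other $k$.
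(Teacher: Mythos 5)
Your proposal is correct and follows exactly the paper's own argument: the $k = 1$ term equals $p_n$ by Lemma~\ref{lemma:path endpoint}, and every other term is bounded above by $p_n$ via the superadditivity of $\{nq_n\}$ from Lemma~\ref{lemma:superadditive paths} applied to $m = k - 1$ and $m' = n - k$. No gaps; nothing further is needed.
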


\begin{proof}
By considering $k = 1$ and using Lemma~\ref{lemma:path endpoint}, 
we see that the maximum is at least~$p_n$.
On the other hand, for every~$k$, 
by Lemmas~\ref{lemma:path endpoint} and~\ref{lemma:superadditive paths}, 
we have
\[
  \frac{1}{n} + \frac{k - 1}{n} q_{k - 1} + \frac{n - k}{n} q_{n - k}
	  \le \frac{1}{n} + \frac{n - 1}{n} q_{n - 1} = p_n \, .
\]
Hence the maximum is at most~$p_n$.
\end{proof}

Finally, 
we are ready to prove our formulas for the probabilities of winning on a path.  

\begin{theorem}
If $n$ is a positive integer, 
then $\PP(P_n) = p_n$ and $\QQ(P_n) = q_n$.
\end{theorem}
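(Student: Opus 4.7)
The plan is to proceed by strong induction on $n$, simultaneously establishing $\PP(P_n) = p_n$ and $\QQ(P_n) = q_n$. The base cases $n = 1, 2$ can be checked directly against the values $p_1 = 1$, $q_1 = 0$, $p_2 = 1/2$, $q_2 = 1/2$ listed in Table~\ref{table:p's and q's}, matching the easy computations $\PP(P_1) = 1$, $\QQ(P_1) = 0$, $\PP(P_2) = \QQ(P_2) = 1/2$.

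For the inductive step, I will exploit the crucial structural fact about paths: if the vertices of $P_n$ are $v_1 \sim v_2 \sim \cdots \sim v_n$, then removing $v_k$ splits $P_n$ into two components isomorphic to $P_{k - 1}$ and $P_{n - k}$ (where $P_0$ denotes the empty graph, contributing nothing to the relevant sums since $|P_0| = 0$, consistent with $p_0 = q_0 = \frac{1}{2}$). Plugging this into the general formula $\PP(T, v) = \frac{1}{n} + \frac{1}{n} \sum_{T'} |T'| \QQ(T')$ from the opening of Section~\ref{sec:mixed}, together with the inductive hypothesis on the smaller paths, yields
\[
\PP(P_n) = \max_{1 \le k \le n} \Bigl( \frac{1}{n} + \frac{k - 1}{n} q_{k - 1} + \frac{n - k}{n} q_{n - k} \Bigr),
\]
and this maximum equals $p_n$ by Lemma~\ref{lemma:path max}. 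Analogously, combining the formula $\QQ(T) = \frac{1}{n^2} \sum_v \sum_{T'} |T'| \PP(T')$ with the inductive hypothesis gives
\[
\QQ(P_n) = \frac{1}{n} \sum_{k = 1}^n \Bigl( \frac{k - 1}{n} p_{k - 1} + \frac{n - k}{n} p_{n - k} \Bigr),
\]
which equals $q_n$ by Lemma~\ref{lemma:path average}.

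With the preparatory lemmas already established, the theorem itself is almost immediate: the real work has been packaged into Lemmas~\ref{lemma:path endpoint}--\ref{lemma:path max}. The genuinely interesting step, which was the main obstacle, is Lemma~\ref{lemma:path max}: it asserts that when playing optimally as the first player on $P_n$, the best first guess is always an endpoint, so that $\PP(P_n, v_1) = \PP(P_n)$. This optimality is not at all obvious a priori, but it follows from the superadditivity of $\{n q_n\}$ (Lemma~\ref{lemma:superadditive paths}), which in turn relies on the sharp estimate in Lemma~\ref{lemma:path estimate}. Everything else in the proof of the theorem is just bookkeeping with the inductive hypothesis.
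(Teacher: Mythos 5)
Your proposal is correct and follows essentially the same route as the paper: both reduce the theorem to the recurrences for $\PP(P_n, k)$ and $\QQ(P_n, k)$ coming from the decomposition of $P_n \smallsetminus v_k$ into $P_{k-1}$ and $P_{n-k}$, and then invoke Lemmas~\ref{lemma:path max} and~\ref{lemma:path average} respectively, closing with induction on $n$. Your added commentary correctly identifies Lemma~\ref{lemma:path max} (endpoint optimality via superadditivity of $\{n q_n\}$) as where the real work lies.
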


\begin{proof}
We may assume that the vertices of the path in order are $1$, $2$, \dots,~$n$.
Recall that $\PP(P_n)$ is $\max_k \PP(P_n, k)$.
For every vertex~$k$, we have
\[
  \PP(P_n, k) = \frac{1}{n} + \frac{k - 1}{n} \QQ(P_{k - 1}) + \frac{n - k}{n} \QQ(P_{n - k}).
\]
Hence 
Lemma~\ref{lemma:path max} provides the correct recurrence relation for~$\PP(P_n)$:
it is given by the maximum over all vertices~$k$ from $1$ to~$n$.
Similarly, 
recall that $\QQ(P_n)$ is the average of $\QQ(P_n, k)$ over all vertices~$k$.
For every vertex~$k$, we have
\[
  \QQ(P_n, k) = \frac{k - 1}{n} \PP(P_{k - 1}) + \frac{n - k}{n} \PP(P_{n - k}).
\]
Hence 
Lemma~\ref{lemma:path average} provides the correct recurrence relation for~$\QQ(P_n)$: 
it is given by the average over all vertices~$k$ from $1$ to~$n$.
The theorem follows by induction on~$n$.
\end{proof}

\subsection{General trees} 

In this subsection,
we bound the probabilities $\PP(T)$ and $\QQ(T)$ of winning for every tree.
We start with an intriguing conjecture that paths and stars 
have the extreme win probabilities.

\begin{conjecture}
\label{conj:half random tree bounds}
For every tree~$T$ on $n$~vertices,
\[
\PP(P_n) \leq \PP(T) \leq \PP(S_n)  \qquad \textrm{ and } \qquad 
\QQ(P_n) \leq \QQ(T) \leq \QQ(S_n).
\]
\end{conjecture}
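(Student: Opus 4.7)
My plan is to proceed by induction on $n$, handling the four inequalities separately but with closely parallel structure. All four are trivial for $n \leq 3$.

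\medskip

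\textbf{Lower bound on $\PP$.} This follows immediately from the lower bound on $\QQ$: assuming inductively that $\QQ(T') \geq q_{|T'|}$ for every tree of order less than $n$, the strategic first player can choose any leaf $v$ of $T$, so $T \smallsetminus v$ is a single tree of order $n - 1$, and Lemma~\ref{lemma:path endpoint} gives
\[
  \PP(T) \geq \PP(T, v) = \frac{1}{n} + \frac{n - 1}{n} \QQ(T \smallsetminus v) \geq \frac{1}{n} + \frac{n - 1}{n} q_{n - 1} = p_n.
\]

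\textbf{Upper bound on $\PP$.} By induction $\QQ(T') \leq \QQ(S_{|T'|})$, so for each vertex $v$ with $T \smallsetminus v$ yielding components of orders $k_1, \ldots, k_d$ summing to $n - 1$,
\[
  n \PP(T, v) - 1 = \sum_i k_i \QQ(T_i) \leq \sum_i k_i \QQ(S_{k_i}) \leq (n - 1) \QQ(S_{n - 1}),
\]
provided the last inequality --- the superadditivity of $k \mapsto k \QQ(S_k)$ --- holds. Using Theorem~\ref{thm:semi-random stars} to write $k \QQ(S_k) = 2k/3 - 2/3 + (2/3) C_k \indicator{k \textrm{ even}}$ with $C_k = \binom{k}{k/2}/2^k$, the needed inequality reduces in each parity case either to an identity or to $C_k + C_m \leq 1 + C_{k + m}$, which follows from the crude bound $C_j \leq 1/2$ for $j \geq 2$. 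Thus $\PP(T, v) \leq \PP(S_n)$ for every $v$, and hence $\PP(T) \leq \PP(S_n)$.

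\medskip

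\textbf{$\QQ$ bounds.} Both bounds reduce, via the recursion and the inductive hypotheses, to combinatorial inequalities about the limb numbers:
\[
  \sum_k k p_k \ell_k(T) \geq 2 \sum_{k = 1}^{n - 1} k p_k \qquad \textrm{ and } \qquad \sum_k k P_k \ell_k(T) \leq (n - 1) + (n - 1)^2 P_{n - 1},
\]
where $P_k = \PP(S_k)$; the right-hand sides equal $n^2 q_n$ (by Lemma~\ref{lemma:path average}) and $n^2 \QQ(S_n)$, respectively. In both inequalities, the two sides have matching zeroth and first moments (Lemma~\ref{lemma:ell}(d, e)), and the symmetry $\ell_k = \ell_{n - k}$ (Lemma~\ref{lemma:ell}(c)) together with the dominance $\ell_1 \geq \ell_k$ (Lemma~\ref{lemma:ell}(f)) suggests that $(\ell_k(T))$ sits in a majorization ``sandwich'' between the uniform distribution of $P_n$ and the extremal distribution of $S_n$. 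Notably, the upper $\QQ$ bound cannot be imitated from the upper $\PP$ argument, since $k \mapsto k P_k$ is \emph{not} superadditive --- indeed $1 \cdot P_1 + 1 \cdot P_1 = 2 > 1 = 2 P_2$ --- so the averaging over $v$ is genuinely needed.

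\medskip

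\textbf{Main obstacle.} The crux is precisely this pair of combinatorial inequalities. A plausible approach is to introduce local tree-rearrangement operations --- sliding a leaf along a path, contracting a long leg of a spider, or subdividing an edge --- and to show that each such move monotonically shifts the weighted limb sum in the desired direction, ultimately connecting any tree to $P_n$ (for the lower bounds) or to $S_n$ (for the upper bounds) through a chain of such moves. The delicate point is that the monotonicity must be verified against the specific numerical weights $k p_k$ and $k P_k$, whose exact values enter the argument; some individual rearrangement steps may well reverse the inequality, forcing one to group moves or to use a more global averaging. Identifying the correct ordering on trees that is jointly respected by both weight sequences is, I expect, the reason the conjecture has so far resisted a uniform proof.
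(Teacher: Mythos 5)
The statement you are addressing is a \emph{conjecture} in the paper, and your proposal does not close it: it proves only one of the four inequalities. Your upper bound on $\PP$ is correct and is exactly the paper's argument (the superadditivity of $k \mapsto k\QQ(S_k)$ is Lemma~\ref{lemma:superadditive stars}, used in the proof of Theorem~\ref{thm:semi-random upper bound}), and your reduction of the $\PP$ lower bound to the $\QQ$ lower bound via a leaf move is sound. But both $\QQ$ bounds are left as unproven combinatorial inequalities about the limb numbers, and the proposed route --- a majorization ``sandwich'' certified by local rearrangement moves --- is only a heuristic; you concede yourself that the monotonicity of the weighted limb sums under such moves has not been verified against the actual weights $kp_k$ and $kP_k$. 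That concession is the gap: without it, neither the $\QQ$ lower bound nor (by your own reduction) the $\PP$ lower bound is established.

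For what it is worth, the two halves of the remaining work are of very different difficulty. The $\QQ$ upper bound is provable, and the paper's proof avoids any comparison of limb-number sequences: it writes $m\PP(S_m) = \tfrac{2}{3}m - \tfrac{1}{3} + \tfrac{2}{3}C(m)$ with $C$ the central-binomial correction, regroups the double sum $\sum_v \sum_{T'}$ as a sum over edges pairing $T_{v,w}$ with $T_{w,v}$, and applies the elementary bound $C(\order{T_{v,w}}) + C(\order{T_{w,v}}) \le 1 + C(n-1)$ on each edge. This edge-pairing step is the idea your sketch is missing; it neutralizes exactly the parity oscillation in $kP_k$ that (as you correctly observe) rules out a superadditivity argument. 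The two lower bounds, by contrast, are genuinely open: the paper only proves the weaker asymptotic bound $9/16$, by replacing $p_k$ with an affine minorant plus a finitely supported correction $\delta$ and controlling the resulting signed combination of $\ell_1(T),\dots,\ell_5(T)$ via Lemma~\ref{lemma:ell}(f),(g). Your target inequality $\sum_k kp_k\ell_k(T) \ge 2\sum_{k=1}^{n-1}kp_k$ differs from an identity only by the non-affine part of $kp_k$ (the moment identities of Lemma~\ref{lemma:ell}(d),(e) kill the affine part), but that remainder alternates in sign in $k$, and the known one-sided limb-number inequalities are too weak to control an alternating-sign test function for all $k$; this is precisely where the conjecture currently stands.
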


We have confirmed the conjecture for~$n \le 20$.  We are able to prove half of the conjecture: stars have the largest probabilities of winning.

\begin{theorem}  \label{thm:semi-random upper bound}
If $T$ is a tree with $n$~vertices, then
\[
  \PP(T) \le \PP(S_n)
    \qquad \text{ and } \qquad
  \QQ(T) \le \QQ(S_n) .
\]
\end{theorem}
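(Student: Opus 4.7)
The plan is to prove both inequalities simultaneously by strong induction on $n = \order{T}$; the base cases $n \le 2$ follow from the direct computations at the beginning of this section. Fix $n \ge 3$ and assume both inequalities hold for every tree on fewer than $n$ vertices.

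For $\PP(T) \le \PP(S_n)$: using the identity $\PP(T, v) = \frac{1}{n} + \frac{1}{n} \sum_i \order{T_i} \QQ(T_i)$ for any vertex~$v$ and the inductive hypothesis $\QQ(T_i) \le \QQ(S_{\order{T_i}})$, it suffices (recalling $\PP(S_n) = \frac{1}{n} + \frac{n - 1}{n} \QQ(S_{n - 1})$) to establish the superadditivity statement $\sum_i n_i \QQ(S_{n_i}) \le (n - 1) \QQ(S_{n - 1})$ whenever the~$n_i$ are positive integers summing to~$n - 1$. Writing $f(k) := k \QQ(S_k)$ and invoking the closed form from Theorem~\ref{thm:semi-random stars}, a short calculation reduces pairwise superadditivity $f(m) + f(k) \le f(m + k)$ to the estimate
\[
  \frac{\binom{m}{m/2}}{2^m} \cdot \indicator{m \text{ even}}
    + \frac{\binom{k}{k/2}}{2^k} \cdot \indicator{k \text{ even}} \le 1
\]
for positive integers~$m, k$, which in turn follows from the well-known bound $\binom{2j}{j}/4^j \le \frac{1}{2}$ for $j \ge 1$. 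A routine induction upgrades pairwise superadditivity to arbitrary partitions of~$n - 1$.

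For $\QQ(T) \le \QQ(S_n)$: the recurrence, together with the inductive hypothesis $\PP(T') \le \PP(S_{\order{T'}})$, gives $n^2 \QQ(T) \le \sum_k g(k) \ell_k(T)$, where $g(k) := k \PP(S_k)$. Since $\ell_1(S_n) = \ell_{n - 1}(S_n) = n - 1$ with all other star limb numbers vanishing, the target inequality becomes $\sum_k g(k) \ell_k(T) \le (n - 1) + (n - 1)^2 \PP(S_{n - 1})$. Setting $\Delta := (n - 1) - \ell_1(T) \ge 0$ and applying Lemma~\ref{lemma:ell} (the identities $\sum_k \ell_k = 2(n - 1)$ and $\sum_k k \ell_k = n(n - 1)$, together with the symmetry $\ell_{n - 1} = \ell_1$), one obtains $\sum_{k = 2}^{n - 2} \ell_k(T) = 2 \Delta$ and $\sum_{k = 2}^{n - 2} k \ell_k(T) = n \Delta$. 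Since $\PP(S_k) \le \frac{2}{3}$ for $k \ge 2$ (immediate from Theorem~\ref{thm:semi-random stars}), we have $g(k) \le \frac{2k}{3}$ on that range, giving
\[
  \sum_k g(k) \ell_k(T)
    \le (n - 1) + (n - 1)^2 \PP(S_{n - 1})
      + \Delta \Bigl[ \tfrac{2n}{3} - 1 - (n - 1) \PP(S_{n - 1}) \Bigr] ,
\]
and the bracketed factor is $\le 0$ because $\PP(S_{n - 1}) \ge \frac{2}{3} - \frac{1}{3(n - 1)} = \frac{2n - 3}{3(n - 1)}$, again from the explicit formula. The main obstacle I anticipate is precisely this $\QQ$ bound: a per-vertex comparison $\QQ(T, v) \le \QQ(S_n)$ is false in general (for example, $\QQ(S_n, c) = \frac{n - 1}{n}$ can exceed $\QQ(S_n)$), so one must average over~$v$ and exploit the limb-number identities rather than argue vertex by vertex.
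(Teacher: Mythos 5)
Your proposal is correct, and while the first half coincides with the paper's argument, the second half takes a genuinely different route. For $\PP(T) \le \PP(S_n)$ you do exactly what the paper does: reduce to superadditivity of $k \mapsto k\QQ(S_k)$, which in both cases comes down to the bound $\binom{2j}{j}/4^j \le \frac{1}{2}$ applied to the closed form of Theorem~\ref{thm:semi-random stars}. For $\QQ(T) \le \QQ(S_n)$, however, the paper proceeds differently: it writes $m\PP(S_m) = \frac{2}{3}m - \frac{1}{3} + \frac{2}{3}C(m)$ with $C(m) = \binom{m-1}{(m-1)/2}/2^{m-1}$ for odd $m$, averages $\QQ(T,v)$ over $v$, pairs the two oriented copies of each edge to show $C(\order{T_{v,w}}) + C(\order{T_{w,v}}) \le 1 + C(n-1)$, and finally matches $\QQ(S_n)$ exactly via the identity $(n-1)C(n-1) = nC(n+1)$. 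You instead organize the same double sum by limb numbers as $\sum_k g(k)\ell_k(T)$ with $g(k) = k\PP(S_k)$, peel off the $k = 1$ and $k = n-1$ contributions (which reproduce $n^2\QQ(S_n)$ on the nose since $g(1)\ell_1 + g(n-1)\ell_{n-1} = \ell_1(1 + (n-1)\PP(S_{n-1}))$), and show the middle range contributes a correction proportional to $\Delta = (n-1) - \ell_1(T) \ge 0$ with a nonpositive coefficient, using only $\PP(S_k) \le \frac{2}{3}$ for $k \ge 2$, the moment identities of Lemma~\ref{lemma:ell}(c)--(e), and $\PP(S_{n-1}) \ge \frac{2}{3} - \frac{1}{3(n-1)}$. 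I checked the algebra: the bracketed coefficient is indeed $\frac{2n}{3} - 1 - (n-1)\PP(S_{n-1}) \le \frac{2n}{3} - 1 - \frac{2n-3}{3} = 0$, and both your bound and the paper's are tight at the star. Your version trades the paper's edge-pairing and binomial identity for the limb-number bookkeeping already set up in Section~\ref{sec:background}; it is arguably cleaner in that it needs no cancellation of the $C(\cdot)$ terms, and it makes transparent that the slack in the inequality is controlled by how far $T$ is from having the maximal number of leaves. Your closing observation that a per-vertex comparison $\QQ(T,v) \le \QQ(S_n)$ fails (e.g.\ $\QQ(S_n, c) = \frac{n-1}{n}$) correctly identifies why averaging is unavoidable, and is the same structural point the paper's proof implicitly relies on.
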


\begin{remark}
Before embarking on the proof, 
we observe one complication that may help explain why this semi-random version is more complicated than the fully optimal play.  
Unlike the situation in which both players play optimally, 
it is \emph{not} the case that there is always a leaf among the optimal moves.  
Consider the tree~$T$ formed by starting from the path~$P_5$ and adding two edges incident to each endpoint; 
see Figure~\ref{fig:suboptimal leaves}.
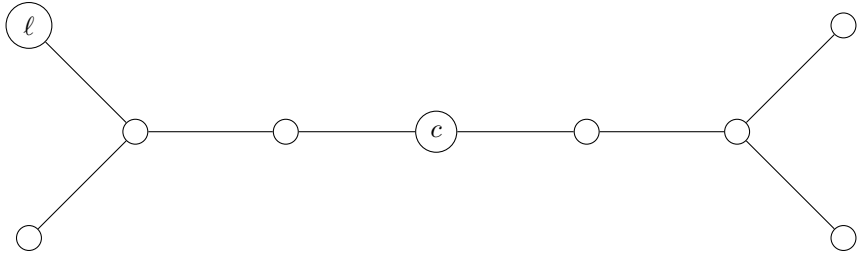
\begin{figure}
\begin{center}
\begin{tikzpicture}[node distance = 2cm, v/.style={circle,draw}]
\node[v] (1) {$\ell$};
\node[v] (2) [below right of = 1] {};
\node[v] (3) [below left of = 2] {};
\node[v] (4) [right of = 2] {};
\node[v] (5) [right of = 4] {$c$};
\node[v] (6) [right of = 5] {};
\node[v] (7) [right of = 6] {};
\node[v] (8) [above right of = 7] {};
\node[v] (9) [below right of = 7] {};
\draw (1) -- (2);
\draw (3) -- (2) -- (4) -- (5) -- (6) -- (7) -- (8);
\draw (7) -- (9);
\end{tikzpicture}
\end{center}
\caption{A tree for which every leaf is a suboptimal move}
\label{fig:suboptimal leaves}
\end{figure}
Then for the central vertex~$c$ one has
\[
  \PP(T, c)     
    = \frac{1}{9} + \frac{8}{9} \cdot \frac{9}{16} \approx 0.611 ,
\]
while for a leaf~$\ell$ one has
\[
  \PP(T, \ell)  
    = \frac{1}{9} + \frac{8}{9} \cdot \frac{12601}{23040}  \approx 0.597 .
\]
\end{remark}

As a first step toward the proof of Theorem~\ref{thm:semi-random upper bound}, 
we study the sequence~$\{ \QQ(S_n) \}_{n \ge 1}$.  
This sequence alternately increases and decreases, 
but we prove a loose monotonicity property on the values 
as they drift upward toward~$\frac{2}{3}$.
Namely, 
the next lemma shows that the sequence~$\{ n \QQ(S_n) \}_{n \ge 1}$ is superadditive.

\begin{lemma}  \label{lemma:superadditive stars}
If $m$ and $n$ are positive integers, then
\[
  m \QQ(S_m) + n \QQ(S_n) \le (m + n) \QQ(S_{m + n}) .
\]
\end{lemma}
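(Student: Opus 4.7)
The plan is to reduce the inequality to a clean statement about central binomial coefficients, then dispose of it by a parity case analysis.

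By Theorem~\ref{thm:semi-random stars}, I would write
\[
  n \QQ(S_n) = \frac{2n}{3} - \frac{2}{3} + \frac{2}{3} c_n,
\]
where I set $c_n = \binom{n}{n/2}/2^n$ if $n$ is even and $c_n = 0$ if $n$ is odd. Plugging this expression into both sides of the proposed inequality, the linear-in-$n$ terms telescope, and after multiplying through by $3/2$ the claim becomes the equivalent inequality
\[
  c_m + c_n \le 1 + c_{m+n}.
\]
So the real content is this inequality on central-binomial-coefficient probabilities.

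Next I would split into cases according to the parities of $m$ and $n$. If $m$ and $n$ are both odd, then $c_m = c_n = 0$ while $c_{m+n} \ge 0$, so the inequality is trivial. If exactly one of $m, n$ is even (say $m$), then $c_n = 0$, $m+n$ is odd so $c_{m+n} = 0$, and the inequality reduces to $c_m \le 1$, which holds because $c_m$ is a probability (the mass at the center of a Binomial$(m, 1/2)$). The remaining case, $m$ and $n$ both even, is the only one requiring any work. Here, $c_{m+n} \ge 0$, so it suffices to establish $c_m + c_n \le 1$ for both $m, n$ even and positive.

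For this last step, I would use the standard monotonicity $c_{k+2}/c_k = (k+1)/(k+2)$, which is verified by expanding the central binomial ratios; this gives $c_k \le c_2 = \tfrac{1}{2}$ for every even $k \ge 2$. Hence $c_m + c_n \le \tfrac12 + \tfrac12 = 1$, completing the final case. I expect no serious obstacle: the bookkeeping in rewriting $n \QQ(S_n)$ is routine, and the only ``inequality'' input needed is the elementary fact that $\binom{k}{k/2}/2^k$ is a decreasing function of $k$ along the even integers. The case $m = n = 2$ (where $c_m + c_n = 1$ exactly) confirms that the bound is tight, so no slack is being wasted.
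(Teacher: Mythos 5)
Your proof is correct and follows essentially the same route as the paper: both reduce to the explicit formula of Theorem~\ref{thm:semi-random stars}, bound the central-binomial term $\binom{k}{k/2}/2^k$ by $\tfrac12$ for positive even $k$, and discard the nonnegative $c_{m+n}$ term. The paper simply applies the uniform bound $k\QQ(S_k) \le \tfrac23 k - \tfrac13$ to both summands without your parity case split, but the substance is identical.
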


\begin{proof}
If $k$ is a positive even number, 
then $\binom{k}{k/2} / 2^k$ is at most~$\frac{1}{2}$.
Hence, by using Theorem~\ref{thm:semi-random stars} twice, we have
\[
  m \QQ(S_m) 
	  \le \frac{2}{3} m - \frac{2}{3} + \frac{2}{3} \cdot \frac{1}{2} = \frac{2}{3} m - \frac{1}{3} \qquad \textrm{ and } \qquad
  n \QQ(S_n) 
	  \le \frac{2}{3} n - \frac{2}{3} + \frac{2}{3} \cdot \frac{1}{2} = \frac{2}{3} n - \frac{1}{3} \, .
\]
Adding these two inequalities and using Theorem~\ref{thm:semi-random stars} again, we conclude
\[
  m \QQ(S_m) + n \QQ(S_n) 
		\le  \frac{2}{3} m - \frac{1}{3} + \frac{2}{3} n - \frac{1}{3}
		= \frac{2}{3} (m + n) - \frac{2}{3}
		\le (m + n) \QQ(S_{m + n}) .  \qedhere
\] 
\end{proof}

\begin{proof}[Proof of Theorem~\ref{thm:semi-random upper bound}]
The proof is by strong induction on~$n$.
The base case~$n = 1$ is easy.  
For $n \ge 2$,
assume the result is true for every tree with fewer than~$n$ vertices.
We will prove the result for tree~$T$.

First we prove the bound on $\PP(T)$.
Let $v$ be a vertex of~$T$.
We have
\[
  \PP(T, v)
	  = \frac{1}{n} + \sum_{T'} \frac{\order{T'}}{n} \QQ(T')
		= \frac{1}{n} + \frac{1}{n} \sum_{T'} \order{T'} \QQ(T') ,
\]
where $T'$ ranges over the components of~$T \smallsetminus v$.
By the induction hypothesis and Lemma~\ref{lemma:superadditive stars}, 
\[
  \PP(T, v)
	  \le \frac{1}{n} + \frac{1}{n} \sum_{T'} \order{T'} \QQ(S_{\order{T'}}) 
		\le \frac{1}{n} + \frac{n - 1}{n} \QQ(S_{n - 1}) .
\]
As shown in the proof of Theorem~\ref{thm:semi-random stars},
the right side equals~$\PP(S_n)$, so $\PP(T, v) \le \PP(S_n)$.
We conclude that
\[
  \PP(T) 
	  = \max_{v \in V(T)} \PP(T, v) 
		\le \PP(S_n) .
\]

Next we prove the bound on $\QQ(T)$.
Let $v$ be a vertex of~$T$.
We have
\[
  \QQ(T, v)
	  = \sum_{T'} \frac{\order{T'}}{n} \PP(T')
		= \frac{1}{n} \sum_{T'} \order{T'} \PP(T') ,
\]
where $T'$ ranges over the components of~$T \smallsetminus v$.
By the induction hypothesis,
\[
  \QQ(T, v)
	  \le \frac{1}{n} \sum_{T'} \order{T'} \PP(S_{\order{T'}}) .
\]
Given a positive integer~$m$, let 
$C(m)$ be $\binom{m - 1}{(m - 1)/2} / 2^{m - 1}$ if $m$ is odd and $0$ otherwise.
Theorem~\ref{thm:semi-random stars} says that
\[
  m \PP(S_m) 
	  = \frac{2}{3} m  - \frac{1}{3} + \frac{2}{3} C(m) \, .
\]
Plugging this formula into our inequality for~$\QQ(T, v)$ gives 
\[
  \QQ(T, v)
	  \le \frac{1}{n} \sum_{T'} \Bigl( \frac{2}{3} \order{T'} - \frac{1}{3} + \frac{2}{3} C(\order{T'}) \Bigr)
		= \frac{2(n - 1)}{3n} - \frac{1}{3n} \deg(v) + \frac{2}{3n} \sum_{T'} C(\order{T'}) .
\]
Given a vertex~$w$ that is a neighbor of~$v$,
recall from the proof of Lemma~\ref{lemma:ell}(c) that the tree~$T_{v, w}$ is 
the component of $T \smallsetminus v$ that contains~$w$.
We can rewrite our previous inequality as
\[
  \QQ(T, v)
		\le \frac{2(n - 1)}{3n} - \frac{1}{3n} \deg(v) 
		      + \frac{2}{3n} \sum_{\substack{w \in V(T) \\ w \sim v}} C(\order{T_{v, w}}) .
\]
Averaging over all vertices~$v$ gives the bound
\[
  \QQ(T)
	  = \frac{1}{n} \sum_{v \in V(T)} \QQ(T, v) 
		\le \frac{2(n - 1)^2}{3n^2} 
		      + \frac{2}{3n^2} \sum_{\substack{v, w \in V(T) \\ v \sim w}} C(\order{T_{v, w}}) .
\]

Given an edge~$\{v, w\}$ of~$T$,
we claim that $C(\order{T_{v, w}}) + C(\order{T_{w, v}})$ is at most~$1 + C(n - 1)$. 
Recall that $\order{T_{v, w}} + \order{T_{w, v}}$ is $n$.
If either $\order{T_{v, w}}$ or $\order{T_{w, v}}$ is~$1$,
then the other is~$n - 1$, so the claim is true with equality.  
Otherwise, both $\order{T_{v, w}}$ and $\order{T_{w, v}}$ are greater than~$1$,
in which case $C(\order{T_{v, w}})$ and $C(\order{T_{w, v}})$ are each at most~$\frac{1}{2}$,
so the claim is again true.
Plugging our claim into our inequality for~$\QQ(T)$  gives
\[
  \QQ(T)
	  \le \frac{2(n - 1)^2}{3n^2} + \frac{2(n - 1)}{3n^2} \bigl( 1 + C(n - 1) \bigr)
		= \frac{2}{3} - \frac{2}{3n} + \frac{2(n - 1)}{3n^2} C(n - 1) .
\]	
It is straightforward to verify that $(n - 1) C(n - 1) = n C(n + 1)$, so
\[
  \QQ(T)
	  \le \frac{2}{3} - \frac{2}{3n} + \frac{2}{3n} C(n + 1) .
\]
By Theorem~\ref{thm:semi-random stars},
the right side is $\QQ(S_n)$, which completes the proof.
\end{proof}

The next theorem makes progress toward the lower bounds of 
Conjecture~\ref{conj:half random tree bounds}.
In particular, 
we show that the player playing optimally wins 
with probability at least~$\frac{9}{16}$ (asymptotically).

\begin{theorem}
Let $T$ be a tree on $n$ vertices.
If $n \ge 4$, then
\[
  \PP(T) > \frac{9}{16}  \qquad \textrm{ and } \qquad
	\QQ(T) \geq \frac{9}{16} - \frac{5}{16n} \, .
\]
\end{theorem}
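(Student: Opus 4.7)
The plan is to prove both bounds simultaneously by strong induction on $n$, verifying directly as base cases all trees with $n$ up to some small threshold (for instance $n \le 8$).

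The inductive step on $\PP(T)$ is easy given the inductive bound on $\QQ$: take $v$ to be any leaf of $T$, so that $T \smallsetminus v$ is a single tree of order $n - 1$. Then the recurrence $\PP(T, v) = 1/n + (n - 1)/n \cdot \QQ(T \smallsetminus v)$ combined with the inductive hypothesis $\QQ(T \smallsetminus v) \ge 9/16 - 5/(16(n - 1))$ yields, after simplification, the stronger bound $\PP(T) \ge 9/16 + 1/(8n)$, which is strictly greater than $9/16$.

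The inductive step on $\QQ(T)$ is where the real work lies. Starting from the formula
\[
  n^2 \QQ(T) = \sum_{(v, T') \in L(T)} \order{T'} \PP(T'),
\]
where $L(T) = \bigsqcup_k L_k(T)$, I would lower-bound each $\PP(T')$ as follows: use the exact values $\PP = 1, 1/2, 2/3, 7/12$ for subtrees of orders~$1, 2, 3, 4$ respectively, and the inductive bound $\PP(T') \ge 9/16 + 1/(8 \order{T'})$ for $\order{T'} \ge 5$. Grouping the resulting contributions by $k = \order{T'}$ and using the identities $\sum_k \ell_k(T) = 2(n - 1)$ and $\sum_k k \ell_k(T) = n(n - 1)$ from Lemma~\ref{lemma:ell}(d, e), the target inequality $\QQ(T) \ge 9/16 - 5/(16n)$ should reduce to a linear inequality in the small limb numbers of the form
\[
  15 \ell_1(T) - 12 \ell_2(T) + 9 \ell_3(T) - 2 \ell_4(T) \ge 12.
\]
To conclude, I would attack this combinatorial inequality using Lemma~\ref{lemma:ell}(g) to control the ``bad'' term $-12 \ell_2$ (bounding $\ell_2 \le \ell_1 + \ell_3 - \ell_4$), with the single tree $\spider_{2, 2, 1}$ excluded from that lemma folded into the base case, together with the estimates $\ell_1 \ge 2$ and $\ell_k \le \ell_1$ from Lemma~\ref{lemma:ell}(b, f).

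The main obstacle will be this final combinatorial inequality on the limb numbers. It is nearly tight on spider trees with many legs of length~$2$ such as $\spider_{2, 2, 2, 2}$, so the chain of lower bounds must be handled carefully; moreover, it fails outright on smaller spiders like $\spider_{2, 2, 2}$ unless the bound on $\PP$ for size-$5$ components is refined using direct computation on the three trees of that order. Extending the base case verification to all trees with $n \le 8$ should suffice to handle the exceptional spiders and the tree~$\spider_{2, 2, 1}$ simultaneously, at which point the inductive argument outlined above can be completed for all larger~$n$.
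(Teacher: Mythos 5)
Your proposal is correct and follows essentially the same route as the paper's proof: the same double induction on strengthened bounds for $\PP$ and $\QQ$, the same reduction of the $\QQ$ step to the linear inequality $15\ell_1 - 12\ell_2 + 9\ell_3 - 2\ell_4\,(+3\ell_5) \ge 12$ in the limb numbers, and the same treatment of the exceptional trees $\spider_{2,2,1}$ and $\spider_{2,2,2}$ via Lemma~\ref{lemma:ell}(g). The only difference is bookkeeping: the paper carries an explicit correction term $\delta(k)$ through the induction (with $\delta(5) = \tfrac{1}{16}$ supplying the $+3\ell_5$ term that rescues $\spider_{2,2,2}$) and needs only the base cases $n \le 4$, whereas you substitute exact values for small components and enlarge the base case to $n \le 8$.
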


\begin{proof}
Define the function~$\delta$ by
$\delta(1) = \frac{5}{16}$, $\delta(2) = - \frac{1}{4}$, $\delta(3) = \frac{3}{16}$,
$\delta(4) = - \frac{1}{24}$, $\delta(5) = \frac{1}{16}$, and $\delta(k) = 0$ for~$k \ge 6$.
We will prove, for every positive integer~$n$, the more precise inequalities
\[
  \PP(T) \geq \frac{9}{16} + \frac{1}{8n} + \frac{\delta(n)}{n}  \qquad \textrm{ and } \qquad
	\QQ(T) \geq \frac{9}{16} - \frac{5}{16n} + \frac{\delta(n + 1)}{n} \, .
\]
The proof is by strong induction on~$n$.
The base cases~$n \le 4$ are easy to check.  
For $n \ge 5$, assume the inequalities hold for every tree with fewer than $n$~vertices.
We will prove the inequalities for tree~$T$.

First we bound~$\PP(T)$.
Let $v$ be a leaf of~$T$.
By the induction hypothesis, 
\begin{align*}
\PP(T) 
  &\ge \PP(T, v) \\
  &= \frac{1}{n} + \frac{n - 1}{n} \QQ(T \smallsetminus v) \\
  &\ge \frac{1}{n} + \frac{n - 1}{n} 
	                     \left( \frac{9}{16} - \frac{5}{16(n - 1)} + \frac{\delta(n)}{n - 1} \right) \\
  &= \frac{9}{16} + \frac{1}{n} - \frac{9}{16n} - \frac{5}{16n} + \frac{\delta(n)}{n} \\
  &= \frac{9}{16} + \frac{1}{8n} + \frac{\delta(n)}{n} \, ,
 \end{align*}
as desired.  

Next we bound~$\QQ(T)$.
By the recurrence formula for~$\QQ$, we have
\begin{align*}
\QQ(T)
  &= \frac{1}{n^2} \sum_v \sum_{T'} \order{T'} \PP(T') \\
	&\ge \frac{1}{n^2} 
	        \sum_v \sum_{T'} \left( \frac{9}{16} \order{T'} + \frac{1}{8} + \delta(\order{T'}) \right) \\
	&= \frac{9(n - 1)}{16 n} + \frac{n - 1}{4 n^2} 
	     + \frac{1}{n^2} \sum_v \sum_{T'} \delta(\order{T'}) \\
	&= \frac{9}{16} - \frac{5}{16n}	- \frac{1}{4n^2} 
		   + \frac{1}{n^2} 
			      \left( \frac{5}{16} \ell_1(T) - \frac{1}{4} \ell_2(T) + \frac{3}{16} \ell_3(T) 
						         - \frac{1}{24} \ell_4(T) + \frac{1}{16} \ell_5(T) \right) \\
	&= \frac{9}{16} - \frac{5}{16n} + \frac{1}{48 n^2}
						\Bigl( 15 \ell_1(T) - 12 \ell_2(T) + 9 \ell_3(T) - 2 \ell_4(T) + 3 \ell_5(T) - 12 \Bigr) .
\end{align*}

To prove the desired bound on~$\QQ(T)$, 
it suffices to prove that the expression in parentheses is nonnegative.  
For short, write $\ell_k$ for~$\ell_k(T)$. 
If $T$ is the unique exceptional tree from Lemma~\ref{lemma:ell}(g) 
(shown in Figure~\ref{fig:counter-example}),
then $\ell_1 = 3$, $\ell_2 = 2$, $\ell_3 = 0$, $\ell_4 = 2$, and $\ell_5 = 3$, which means
\[
  15 \ell_1 - 12 \ell_2 + 9 \ell_3 - 2 \ell_4 + 3 \ell_5
	  = 45 - 24 + 0 - 4 + 9
		= 26.
\]
Hence we may assume that $T$ is not the exceptional tree.
By Lemma~\ref{lemma:ell}(g), we have $\ell_1 + \ell_3 \ge \ell_2 + \ell_4$, so
\[
  15 \ell_1 - 12 \ell_2 + 9 \ell_3 - 2 \ell_4 + 3 \ell_5
	  \ge 13 \ell_1 - 10 \ell_2 + 7 \ell_3 + 3 \ell_5.
\]
If $\ell_1 = 2$, 
then $T$ is a path (Lemma~\ref{lemma:2-3 leaves}(a)), 
so $\ell_2 = \ell_3 = 2$ (Lemma~\ref{lemma:2-3 leaves}(b)), which means
\[
  13 \ell_1 - 10 \ell_2 + 7 \ell_3 + 3 \ell_5
	  \ge 13 \ell_1 - 10 \ell_2 + 7 \ell_3 
		= 26 - 20 + 14
		= 20.
\]
If $\ell_1 \ge 4$, then because $\ell_1 \ge \ell_2$ (Lemma~\ref{lemma:ell}(f)), we have
\[
  13 \ell_1 - 10 \ell_2 + 7 \ell_3 + 3 \ell_5
	  \ge 13 \ell_1 - 10 \ell_2 
		\ge 3 \ell_1
		\ge 12.
\]
Hence we may assume that $\ell_1 = 3$.
If $\ell_2 \le 2$, then
\[
  13 \ell_1 - 10 \ell_2 + 7 \ell_3 + 3 \ell_5
	  \ge 13 \ell_1 - 10 \ell_2 
	  = 39 - 10 \ell_2
		\ge 39 - 20
		= 19.
\]
Hence we may assume that $\ell_2 = 3$.
If $\ell_3 \ge 1$, then
\[
  13 \ell_1 - 10 \ell_2 + 7 \ell_3 + 3 \ell_5
	  \ge 13 \ell_1 - 10 \ell_2 + 7 \ell_3 
		= 39 - 30 + 7 \ell_3
		\ge 39 - 30 + 7
		= 16.
\]
Hence we may assume that $\ell_3 = 0$.
By Lemma~\ref{lemma:2-3 leaves}(e), 
the only tree with $\ell_1 = 3$, $\ell_2 = 3$, and $\ell_3 = 0$ is the spider~$\spider_{2, 2, 2}$, 
shown in Figure~\ref{fig:3-3-0}.
For this tree, $\ell_5 = 3$, which means
\[
  13 \ell_1 - 10 \ell_2 + 7 \ell_3 + 3 \ell_5
	  = 39 - 30 + 0 + 9
		= 18.  
\]
In every case, we have $15 \ell_1 - 12 \ell_2 + 9 \ell_3 - 2 \ell_4 + 3 \ell_5 - 12 \geq 0$.
This completes the proof.
\end{proof}

\section{Two random players}
\label{sec:random}

In this section,
we consider the tree search game when both players play randomly.
Given a tree~$T$, 
let $\PP(T)$ be the probability that the first player wins on~$T$ when both players choose vertices uniformly at random from among the vertices that could be the target.
For example, $\PP(S_1) = 1$, $\PP(S_2) = \frac{1}{2}$, and $\PP(S_3) = \frac{5}{9}$.

For every vertex~$v$ of a tree~$T$, 
let $\PP(T, v)$ be the conditional probability that the first player wins on~$T$
given that they select~$v$ first.
Since the first contestant plays uniformly at random, 
\[
  \PP(T) = \frac{1}{n} \sum_{v \in V(T)} \PP(T, v) .
\]
Moreover, since the target vertex is selected uniformly at random,
\[
  \PP(T, v) 
	  = \frac{1}{n} + \sum_{T'} \frac{\order{T'}}{n} \bigl( 1 - \PP(T') \bigr)
		= 1 - \frac{1}{n} \sum_{T'} \order{T'} \PP(T') ,
\]
where $T'$ ranges over the components of $T \smallsetminus v$.
Combining these two formulas gives
\[
  \PP(T) = 1 - \frac{1}{n^2} \sum_{v \in V(T)} \sum_{T'} \order{T'} \PP(T') .
\]

We first exactly compute the probability of winning for every star.

\begin{theorem}  \label{thm:random stars}
If $n$ is a positive integer, then
\[
  \PP(S_n) = \frac{1}{2} + \frac{1}{2n^2} \cdot  \indicator{n \textrm{ is odd}} \, .
\]
\end{theorem}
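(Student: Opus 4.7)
The plan is to prove the formula by induction on $n$, using the recurrence one gets by conditioning on the first player's choice of vertex.

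First I would verify the base cases $n = 1$ and $n = 2$ directly (these are already noted in the surrounding text). Then, assuming the formula holds for $S_{n-1}$, I would derive a recurrence for $\PP(S_n)$. The first player chooses uniformly among the $n$ vertices, so there are two cases to combine: the first player picks the center (probability $1/n$), or picks one of the $n-1$ leaves (total probability $(n-1)/n$). If the first player picks the center and misses (probability $(n-1)/n$), the remaining ``game'' is trivial --- the forest $S_n \smallsetminus c$ consists of $n-1$ isolated vertices, and the second player learns which single-vertex component contains the target, so the second player wins with probability $1$. This gives $\PP(S_n, c) = 1/n$. If instead the first player picks a specific leaf $\ell$ and misses, the remaining tree is precisely the star $S_{n-1}$ with the second player to move, so $\PP(S_n, \ell) = \frac{1}{n} + \frac{n-1}{n}\bigl(1 - \PP(S_{n-1})\bigr)$. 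Averaging these contributions against the uniform choice of first move yields
\[
  \PP(S_n)
    = \frac{1}{n} + \frac{(n-1)^2}{n^2}\bigl(1 - \PP(S_{n-1})\bigr).
\]

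With this recurrence in hand, the induction splits cleanly by the parity of $n$. If $n$ is even then $n-1$ is odd and the inductive hypothesis gives $1 - \PP(S_{n-1}) = \frac{1}{2} - \frac{1}{2(n-1)^2} = \frac{n(n-2)}{2(n-1)^2}$; plugging in and simplifying reduces $\PP(S_n)$ to $\frac{1}{2}$. If $n$ is odd then $n - 1$ is even, so $1 - \PP(S_{n-1}) = \frac{1}{2}$, and a short algebraic manipulation gives $\PP(S_n) = \frac{1}{n} + \frac{(n-1)^2}{2n^2} = \frac{n^2+1}{2n^2} = \frac{1}{2} + \frac{1}{2n^2}$. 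Both cases match the claimed formula, so the induction closes.

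There is no real obstacle here: the only thing to be careful about is the case analysis on the first player's move (in particular, recognizing that picking the center reduces to isolated vertices, whereas picking a leaf reduces to $S_{n-1}$), and the parity split in the inductive step. Everything else is routine simplification.
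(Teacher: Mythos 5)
Your proposal is correct and follows essentially the same route as the paper: the same base cases, the same case split between the center (where a miss leaves isolated vertices and the opponent wins for sure) and a leaf (where a miss reduces to $S_{n-1}$), and the same recurrence $\PP(S_n) = \frac{1}{n} + \frac{(n-1)^2}{n^2}\bigl(1 - \PP(S_{n-1})\bigr)$, which is an algebraic rearrangement of the paper's. The only cosmetic difference is that you split the inductive step by the parity of $n$ while the paper carries the indicator term through a single computation.
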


\begin{proof}
The proof is by induction on~$n$.
The base cases $n = 1$ and $n = 2$ are easy to check.
For~$n \ge 3$,
assume the result is true for~$n - 1$.

If $c$ is the center of the star, 
then $\PP(S_n, c) = \frac{1}{n}$.
If $\ell$ is a leaf, then 
\[
  \PP(S_n, \ell) 
	  = \frac{1}{n} + \frac{n - 1}{n} \Bigl( 1 - \PP(S_{n - 1}) \Bigr)
		= 1 - \frac{n - 1}{n} \PP(S_{n - 1}) .
\]
Taking the weighted average of the center and the $n - 1$ leaves, we have
\[
  \PP(S_n)
	  = \frac{1}{n} \cdot \frac{1}{n} 
		    + \frac{n - 1}{n} \Bigl( 1 - \frac{n - 1}{n} \PP(S_{n - 1}) \Bigr)
		= 1 - \frac{1}{n} + \frac{1}{n^2} - \frac{(n - 1)^2}{n^2} \PP(S_{n - 1}) .		
\]
By the induction hypothesis,
\begin{align*}
  \PP(S_n) 
	  &= 1 - \frac{1}{n} + \frac{1}{n^2} 
		    - \frac{(n - 1)^2}{n^2} 
				\Bigl( \frac{1}{2} +  \frac{1}{2(n - 1)^2} \cdot \indicator{n \textrm{ is even}} \Bigr) \\
		&= \frac{1}{2} + \frac{1}{2n^2} -  \frac{1}{2n^2} \cdot \indicator{n \textrm{ is even}} \\
		&= \frac{1}{2} +  \frac{1}{2n^2} \cdot \indicator{n \textrm{ is odd}} \, ,
\end{align*}
which completes the induction.
\end{proof}

We next exactly compute the probability of winning for a path.

\begin{theorem}  \label{thm:random paths}
If $n$ is an integer such that $n \ge 3$, then 
\[
   \PP(P_n) = \frac{1}{2} + \frac{1}{6n}.
\] 
\end{theorem}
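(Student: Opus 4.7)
The plan is to derive a recurrence for $\PP(P_n)$ from the general formula $\PP(T) = 1 - \frac{1}{n^2}\sum_v \sum_{T'} \order{T'} \PP(T')$ established at the start of Section~\ref{sec:random}, and then prove the claim by strong induction on~$n$. Labelling the vertices of $P_n$ in order as $1, \ldots, n$, deletion of vertex $k$ produces two sub-paths of orders $k-1$ and $n-k$, so substituting into the general formula and applying the symmetry $k \leftrightarrow n+1-k$ yields
\[
\PP(P_n) = 1 - \frac{1}{n^2}\sum_{k=1}^n\bigl[(k-1)\PP(P_{k-1}) + (n-k)\PP(P_{n-k})\bigr] = 1 - \frac{2}{n^2}\sum_{k=1}^{n-1} k\,\PP(P_k),
\]
where empty sub-paths contribute nothing to the sum.

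For the base case $n = 3$, one computes directly $\PP(P_3) = 1 - \frac{2}{9}(1\cdot 1 + 2\cdot \frac{1}{2}) = \frac{5}{9} = \frac{1}{2} + \frac{1}{18}$. For the inductive step, I would fix $n \geq 4$ and assume the formula holds for all indices $3, \ldots, n-1$. I would then split the sum in the recurrence into the exceptional small-$k$ terms ($k=1,2$, contributing $1\cdot 1 + 2\cdot \frac{1}{2} = 2$) and the remaining terms $k \geq 3$, to which the inductive hypothesis $k\PP(P_k) = \frac{k}{2} + \frac{1}{6}$ applies. A short arithmetic calculation gives
\[
\sum_{k=1}^{n-1} k\,\PP(P_k) = 2 + \sum_{k=3}^{n-1}\Bigl(\frac{k}{2} + \frac{1}{6}\Bigr) = \frac{n(3n-1)}{12},
\]
and plugging this into the recurrence immediately yields $\PP(P_n) = 1 - \frac{3n-1}{6n} = \frac{1}{2} + \frac{1}{6n}$, completing the induction.

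The proof is essentially routine once the recurrence is written down, so there is no genuine obstacle. The only subtlety, and the reason the theorem is stated only for $n \geq 3$, is that the closed-form expression $\PP(P_k) = \frac{1}{2} + \frac{1}{6k}$ fails for $k \in \{1, 2\}$ (where instead $\PP(P_1) = 1$ and $\PP(P_2) = \frac{1}{2}$). Consequently the inductive hypothesis can be applied only to $k \geq 3$, and the contributions of $k = 1, 2$ must be tracked separately; this bookkeeping is the only thing to be careful about.
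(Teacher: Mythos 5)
Your proof is correct and takes essentially the same approach as the paper: strong induction on the recurrence $\PP(P_n) = 1 - \frac{1}{n^2}\sum_k\bigl[(k-1)\PP(P_{k-1}) + (n-k)\PP(P_{n-k})\bigr]$, with the anomalous values at $k = 1, 2$ tracked separately (the paper packages this bookkeeping into a correction function $\delta$ rather than splitting the sum explicitly, but the computation is the same).
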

\begin{proof}
For convenience, define $\PP(P_0)$ to be~$\frac{1}{2}$.
Define the function~$\delta$ by 
$\delta(0) = - \frac{1}{6}$, $\delta(1) = \frac{1}{3}$, $\delta(2) = - \frac{1}{6}$, 
and $\delta(k) = 0$ for~$k \ge 3$.
We will prove, for every nonnegative integer~$n$, that $n \PP(P_n) = \frac{1}{2} n + \frac{1}{6} + \delta(n)$.
The proof is by strong induction on~$n$.
The base cases~$n \le 2$ are easy to check.
For $n \ge 3$, assume the result is true for every nonnegative integer less than~$n$.
We will prove the result for~$n$.

We may assume that the vertices of the path in order are $1$, $2$, \dots, $n$.
By the induction hypothesis, for every vertex~$k$, we have
\begin{align*}
  \PP(P_n, k)
		&= 1 - \frac{k - 1}{n} \PP(P_{k - 1}) - \frac{n - k}{n} \PP(P_{n - k})  \\
		&= 1 - \frac{1}{n} \Bigl( \frac{1}{2} (k - 1) + \frac{1}{6} + \delta(k - 1) \Bigr)
		     - \frac{1}{n} \Bigl( \frac{1}{2} (n - k) + \frac{1}{6} + \delta(n - k) \Bigr) \\
		&= \frac{1}{2} + \frac{1}{6n} 
		     - \frac{\delta(k - 1)}{n} - \frac{\delta(n - k)}{n} \, .		
\end{align*}			
Averaging over all vertices gives
\begin{align*}
  \PP(P_n) 
	  &= \frac{1}{n} \sum_{k = 1}^n \PP(P_n, k)  \\
		&= \frac{1}{2} + \frac{1}{6n} 
		     - \frac{1}{n^2} \sum_{k = 1}^n \delta(k - 1) 
				 - \frac{1}{n^2} \sum_{k = 1}^n \delta(n - k)  \\
		&= \frac{1}{2} + \frac{1}{6n} \, .		\qedhere
\end{align*}
\end{proof}

We again conjecture that stars and paths have the extreme win probabilities
(though now stars are the lower bound and paths are the upper bound).

\begin{conjecture}\label{conj:all random}
For every tree~$T$ on $n$~vertices, $\PP(S_n) \leq \PP(T) \leq \PP(P_n)$.
\end{conjecture}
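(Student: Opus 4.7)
The plan is joint strong induction on $n$, since the recursion
\[
  \PP(T) = 1 - \frac{1}{n^2} \sum_{v \in V(T)} \sum_{T'} \order{T'} \PP(T')
\]
(with $T'$ ranging over the components of $T \smallsetminus v$) links $\PP(T)$ to $\PP(T')$ on smaller trees, and each direction of the conjectured inequality will want to apply the inductive hypothesis in the opposite direction to subtrees.  Writing $\epsilon(T) := \PP(T) - \frac{1}{2}$, the recursion rearranges to $n^2 \epsilon(T) = \frac{n}{2} - \sum_v \sum_{T'} \order{T'} \epsilon(T')$, so the two conjectured inequalities become
\[
  \frac{n}{3} \leq \sum_{v \in V(T)} \sum_{T'} \order{T'} \epsilon(T') \leq \frac{n}{2} - \frac{1}{2} \indicator{n \textrm{ odd}}.
\]
For the upper bound on $\PP(T)$, I would apply the inductive lower bound $\epsilon(T') \geq \epsilon(S_{\order{T'}}) = \frac{\indicator{\order{T'} \textrm{ odd}}}{2 \order{T'}^2}$, regroup contributions by the limb sets $L_k(T)$, and reduce to a limb-number inequality of the form $\sum_{k \textrm{ odd}} \ell_k(T)/k \geq \frac{2n}{3}$.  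For the lower bound on $\PP(T)$, I would analogously apply $\epsilon(T') \leq \frac{1}{6 \order{T'}}$ for $\order{T'} \geq 3$ (with the cases $\order{T'} \in \{1, 2\}$ handled separately using the exact values $\PP(P_1) = 1$ and $\PP(P_2) = \frac{1}{2}$), obtaining a bound involving $\ell_1(T)$ and $\ell_2(T)$ via the same leaf/limb manipulations as in the proofs of Theorem~\ref{thm:semi-random upper bound} and the $\frac{9}{16}$ lower bound.

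The main obstacle is that these naive inductive bounds are not tight enough to close either direction: they are tight on paths but loose on stars for the upper-bound induction, and tight on stars but loose on paths for the lower-bound induction.  Concretely, the upper-bound reduction $\sum_{k \textrm{ odd}} \ell_k(T)/k \geq \frac{2n}{3}$ already fails for long paths, where the left side grows only like $\log n$; the lower-bound reduction $2 \ell_1(T) - \ell_2(T) \leq n + 2 - 3 \indicator{n \textrm{ odd}}$ fails for the star $S_n$ whenever $n \geq 5$.  Closing the induction therefore requires strengthening the hypothesis to track additional structural information about each subtree beyond its order.

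A natural candidate is the strengthened upper bound
\[
  \PP(T) \leq \frac{1}{2} + \frac{1}{6n} - \frac{\ell_1(T) - 2}{6 n^2},
\]
which is exact on paths (where $\ell_1 = 2$) and recovers $\PP(S_n) = \frac{1}{2} + \frac{1}{2n^2}$ on odd-order stars (where $\ell_1 = n - 1$), together with an analogous leaf-sensitive refinement of the lower bound.  With such a strengthening, the induction step reduces to limb-number inequalities similar in flavor to Lemma~\ref{lemma:ell}(g) but with different coefficients; verifying them will likely demand auxiliary structural lemmas in the spirit of Lemma~\ref{lemma:2-3 leaves}, plus careful case analysis for small $n$ in which the exceptional trees of Figures~\ref{fig:counter-example} and~\ref{fig:3-3-0} should be expected to reappear.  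Establishing these strengthened limb-number inequalities is the heart of any attempt at a proof, and the fact that the authors state the result as Conjecture~\ref{conj:all random} rather than a theorem suggests that the needed inequalities may require genuinely new structural input beyond what Lemmas~\ref{lemma:ell} and~\ref{lemma:2-3 leaves} currently provide.
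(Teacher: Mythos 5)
There is no proof to compare against: the statement you are addressing is stated in the paper as Conjecture~\ref{conj:all random}, verified computationally only for $n \le 20$; the strongest result the authors actually prove in this direction is the constant-factor bound $\frac{13}{30} < \PP(T) < \frac{17}{30}$, obtained by an induction that tracks small additive corrections $\delta(\order{T'})$ only for $\order{T'} \le 5$ together with the limb-number inequalities of Lemma~\ref{lemma:ell}. Your proposal likewise does not prove the conjecture, and to your credit you say so. Your setup is sound and your diagnosis of the obstruction is correct and correctly computed: the recursion forces each direction of the induction to invoke the opposite direction on the subtrees, the resulting reduction $\sum_{k \text{ odd}} \ell_k(T)/k \ge \frac{2n}{3}$ fails on long paths, and the reduction $2\ell_1(T) - \ell_2(T) \le n + 2 - 3\cdot\indicator{n \text{ odd}}$ fails on stars with $n \ge 5$. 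This is precisely why the authors' method only yields bounds that are uniformly bounded away from $\frac12$ rather than the conjectured extremes.

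The genuine gap is that everything after the diagnosis is unsubstantiated. The candidate strengthening $\PP(T) \le \frac12 + \frac{1}{6n} - \frac{\ell_1(T) - 2}{6n^2}$ is only checked to be consistent with the two extremal families; you give no evidence it holds for general trees, and no companion lower-bound invariant is even written down. More importantly, even granting the strengthened hypothesis on subtrees, closing the induction requires controlling the quantity $\sum_{v} \sum_{T'} \ell_1(T')$ (the total leaf count of all components over all vertex deletions) in terms of invariants of $T$ itself, and nothing in Lemma~\ref{lemma:ell} or Lemma~\ref{lemma:2-3 leaves} addresses sums of that type; this is a new structural problem, not a ``similar in flavor'' variant of Lemma~\ref{lemma:ell}(g). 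As it stands, your submission is a research plan that stalls exactly where the open problem lives, so it cannot be accepted as a proof of the statement.
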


We have confirmed the conjecture for~$n \le 20$.  Conjecture~\ref{conj:all random} implies that the probability of the first player winning is always at least $\frac{1}{2}$ and approaches $\frac{1}{2}$ on every sequence of trees as $n \to \infty$.


The next theorem makes progress toward Conjecture~\ref{conj:all random}.  
We show that both players win with probability between $\frac{13}{30}$ and~$\frac{17}{30}$ (for~$n \ge 2$).

\begin{theorem} 
Let $T$ be a tree with $n$ vertices.  
If $n \ge 2$, then
\[
  \frac{13}{30} < \PP(T) < \frac{17}{30} \, .
\]
\end{theorem}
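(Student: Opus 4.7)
I will prove both inequalities simultaneously by strong induction on~$n$, since the inductive step for each bound relies on the other bound for smaller trees. The base cases ($n \le n_0$ for some small constant) are handled by direct computation, using the explicit formulas $\PP(P_n) = \tfrac{1}{2} + \tfrac{1}{6n}$ for $n \ge 3$ and $\PP(S_n) = \tfrac{1}{2} + \tfrac{1}{2n^2} \indicator{n \text{ is odd}}$ from Theorems~\ref{thm:random paths} and~\ref{thm:random stars}, together with hand computation on the finitely many remaining trees of small order.

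For the inductive step I use the recurrence
\[
  \PP(T) = 1 - \frac{1}{n^2} \sum_{v \in V(T)} \sum_{T'} \order{T'} \PP(T') .
\]
The lower bound $\PP(T) > 13/30$ follows easily: separating out the single-vertex components (on which $\PP = 1$) and applying the inductive hypothesis $\PP(T') < 17/30$ whenever $\order{T'} \ge 2$ gives
\[
  \sum_v \sum_{T'} \order{T'} \PP(T') < \ell_1(T) + \tfrac{17}{30}\bigl(n(n-1) - \ell_1(T)\bigr) \le \tfrac{17}{30} n(n-1) + \tfrac{13}{30}(n-1) ,
\]
and hence $\PP(T) > \tfrac{13}{30} + \tfrac{4n + 13}{30 n^2} > \tfrac{13}{30}$.

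The upper bound $\PP(T) < 17/30$ is more delicate. The symmetric substitution $\PP(T') > 13/30$ alone yields only $\PP(T) < \tfrac{17}{30} + \tfrac{13}{30n} - \tfrac{17 \ell_1(T)}{30 n^2}$, which beats~$17/30$ strictly only when $\ell_1(T) > 13n/17$. For trees with fewer leaves I plan to sharpen the bound on the contribution of small subtrees via their exact values ($\PP(P_2) = 1/2$, $\PP(P_3) = 5/9$, and more generally $\PP(T') \ge 1/2$ for all trees $T'$ of order up to some cutoff, verified directly in the base cases). Collecting terms, the target becomes an inequality of the form $\sum_k c_k \ell_k(T) > 13 n$ with explicit positive constants~$c_k$, which I would discharge by a case analysis on $\ell_1(T)$ parallel to the proof of the $\QQ$ bound in Section~\ref{sec:mixed}: the case $\ell_1(T) = 2$ falls to Lemma~\ref{lemma:2-3 leaves}(a), forcing $T = P_n$ and reducing to the exact formula for paths; the case $\ell_1(T) = 3$ uses Lemma~\ref{lemma:2-3 leaves}(c) together with the limb-number identities of Lemma~\ref{lemma:ell}; and the remaining cases $\ell_1(T) \ge 4$ are handled using Lemma~\ref{lemma:ell}(f)--(g), with the exceptional tree of Figure~\ref{fig:counter-example} checked by direct substitution.

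The main obstacle is closing the upper bound on trees with very few leaves and large diameter, where the naive induction $\PP(T') > 13/30$ loses too much. This forces one either to invoke the exact formula for paths or to verify the stronger estimate $\PP(T') \ge 1/2$ on sufficiently many small-order subtrees, and the resulting limb-number inequality is likely to require splitting into several sub-cases in the same spirit as the discharge argument in the preceding section.
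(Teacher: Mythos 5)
Your lower bound argument is sound (for $n \ge 3$; note that the step $\ell_1(T) \le n-1$ fails at $n=2$, which you cover in the base cases anyway), and it is in fact lighter than the paper's, which proves a sharper statement $\PP(T) \ge \frac{13}{30} + \frac{3}{10n} + \frac{\delta(n)}{n}$ precisely because that sharper form is needed elsewhere. The genuine gap is in the upper bound, and you have correctly located but not closed it. Your plan is to recover the missing $\frac{13}{30n}$ by replacing $\PP(T') > \frac{13}{30}$ with $\PP(T') \ge \frac12$ on components of order up to a fixed cutoff $K$, turning the target into $\sum_k c_k \ell_k(T) > 13n$ with $c_1 = 17$ and $c_k = 2k$ for $2 \le k \le K$. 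This cannot work with any fixed $K$: for the spider with three legs of length about $n/3$ one has $\ell_k \le 3$ for all $k \le n/3$, so the left side is at most $3(K^2+K) + 45 = O(1)$ while the right side grows linearly in $n$. Handling $\ell_1 = 2$ by the exact path formula does not rescue this, since the obstruction already occurs at $\ell_1 = 3$ (and indeed at every bounded leaf count with long legs), where no exact formula is available. A gain that is merely $O(1)$ per tree cannot beat a deficit of order $n$.

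What the paper does differently, and what your approach is missing, is a strengthened induction hypothesis whose lower bound carries a term of order $1/\order{T'}$: namely $\PP(T') \ge \frac{13}{30} + \frac{3}{10\order{T'}} + \frac{\delta(\order{T'})}{\order{T'}}$. When this is substituted into $\sum_{v}\sum_{T'} \order{T'}\PP(T')$, \emph{every} pair $(v, T')$ --- and there are $2(n-1)$ of them --- contributes an extra $\frac{3}{10}$ regardless of the component's order, yielding a gain of $\frac{3(n-1)}{5}$ that is linear in $n$ and therefore dominates the $\frac{13}{30n}$ loss after division by $n^2$. (Your own lower bound self-improves only to $\frac{13}{30} + \frac{2}{15n} + O(1/n^2)$, and the coefficient $\frac{2}{15}$ is below the threshold $\frac{13}{60}$ needed for this mechanism, so even bootstrapping your estimate does not close the induction; the paper needs $\frac{3}{10}$ and the finitely supported correction $\delta$ to make the base cases and the residual limb-number inequality $8\ell_1 - 5\ell_2 + 2\ell_3 - \ell_4 + 4\ell_5 \ge 10$ come out right.) Your proposed case analysis on $\ell_1$ via Lemmas~\ref{lemma:ell}(f)--(g) and~\ref{lemma:2-3 leaves} does mirror the discharge step the paper actually performs, but in the paper that step only controls a bounded correction term; the linear-in-$n$ part must come from the strengthened hypothesis, which is the missing idea here.
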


\begin{proof}
Define the function~$\delta$ by 
$\delta(1) = \frac{4}{15}$, $\delta(2) = - \frac{1}{6}$, $\delta(3) = \frac{1}{15}$,
$\delta(4) = - \frac{1}{30}$, $\delta(5) = \frac{2}{15}$, and $\delta(k) = 0$ for~$k \ge 6$.
Define the function~$\Delta$ by 
$\Delta(1) = \frac{1}{3}$, $\Delta(2) = 0$, $\Delta(3) = \frac{2}{45}$, 
and $\Delta(k) = 0$ for~$k \ge 4$.
We will prove, for every positive integer~$n$, the more precise inequality
\[
  \frac{13}{30} + \frac{3}{10n} + \frac{\delta(n)}{n}
	  \le \PP(T)
		\le \frac{17}{30} - \frac{1}{6n} + \frac{4}{15 n^2} + \frac{\Delta(n)}{n} \, .
\]
The proof is by strong induction on~$n$.
The base cases~$n \le 5$ are easy to check.  
For $n \ge 6$, assume the inequality holds for every tree with fewer than $n$~vertices.
We will prove the inequality for tree~$T$.

First we prove the upper bound on~$\PP(T)$.
By the induction hypothesis, 
\begin{align*}
\PP(T) 
  &= 1 - \frac{1}{n^2} \sum_{v} \sum_{T'} \order{T'} \PP(T') \\
  &\le 1 - \frac{1}{n^2} \sum_v \sum_{T'} 
	       \Bigl( \frac{13}{30} \order{T'} + \frac{3}{10} + \delta(\order{T'}) \Bigr) \\
	&= 1 - \frac{13n(n - 1)}{30 n^2} - \frac{6(n - 1)}{10n^2} 
	     - \frac{1}{n^2} \sum_v \sum_{T'} \delta(\order{T'}) \\
	&= \frac{17}{30} - \frac{1}{6n} + \frac{3}{5n^2} 
	                 - \frac{1}{n^2} \sum_v \sum_{T'} \delta(\order{T'}) \\
	&= \frac{17}{30} - \frac{1}{6n} + \frac{3}{5n^2} 
		    - \frac{1}{n^2} \Bigl( 
				    \frac{4}{15} \ell_1(T) - \frac{1}{6} \ell_2(T) 
						  + \frac{1}{15} \ell_3(T) - \frac{1}{30} \ell_4(T) + \frac{2}{15} \ell_5(T)
						            \Bigr) \\
	&= \frac{17}{30} - \frac{1}{6n} + \frac{3}{5n^2} 
	     - \frac{1}{30n^2} 
	    \Bigl( 8 \ell_1(T) - 5 \ell_2(T) + 2 \ell_3(T) - \ell_4(T) + 4 \ell_5(T) \Bigr) .							
\end{align*}

We claim that $8 \ell_1(T) - 5 \ell_2(T) + 2 \ell_3(T) - \ell_4(T) + 4 \ell_5(T) \ge 10$.
For short, write $\ell_k$ for~$\ell_k(T)$.
If $T$ is the unique exceptional tree from Lemma~\ref{lemma:ell}(g) 
(shown in Figure~\ref{fig:counter-example}),
then $\ell_1 = 3$, $\ell_2 = 2$, $\ell_3 = 0$, $\ell_4 = 2$, and $\ell_5 = 3$,
which means
\[
  8 \ell_1 - 5 \ell_2 + 2 \ell_3 - \ell_4 + 4 \ell_5
	  = 24 - 10 + 0 - 2 + 12
		= 24.
\]
Hence we may assume that $T$ is not the exceptional tree.
By Lemma~\ref{lemma:ell}(g),
we have $\ell_1 + \ell_3 \ge \ell_2 + \ell_4$, 
which means
\[
  8 \ell_1 - 5 \ell_2 + 2 \ell_3 - \ell_4 + 4 \ell_5
	  \ge 7 \ell_1 - 4 \ell_2 + \ell_3 + 4 \ell_5 .
\]
If $\ell_1 = 2$, then $T$ is a path (Lemma~\ref{lemma:2-3 leaves}(a)), 
so $\ell_2 = \ell_3 = \ell_4 = \ell_5 = 2$ (Lemma~\ref{lemma:2-3 leaves}(b)), which means
\[
  7 \ell_1 - 4 \ell_2 + \ell_3 + 4 \ell_5 
	  = 14 - 8 + 2 + 8
		= 16.
\] 
If $\ell_1 \ge 4$, then because $\ell_1 \ge \ell_2$ (Lemma~\ref{lemma:ell}(f)), we have
\[
  7 \ell_1 - 4 \ell_2 + \ell_3 + 4 \ell_5 
	  \ge 7 \ell_1 - 4 \ell_2
		\ge 3 \ell_1
		\ge 12.
\]
Hence we may assume that $\ell_1 = 3$.
If $\ell_2 \le 2$, then
\[
  7 \ell_1 - 4 \ell_2 + \ell_3 + 4 \ell_5 
	  \ge 7 \ell_1 - 4 \ell_2
		= 21 - 4 \ell_2
		\ge 21 - 8
		= 13.
\]
Hence we may assume that $\ell_2 = 3$.
If $\ell_3 \ge 1$, then
\[
  7 \ell_1 - 4 \ell_2 + \ell_3 + 4 \ell_5 
    \ge 7 \ell_1 - 4 \ell_2 + \ell_3
		= 21 - 12 + \ell_3
		\ge 21 - 12 + 1
		= 10.
\]
Hence we may assume that $\ell_3 = 0$.
By Lemma~\ref{lemma:2-3 leaves}(e),
the only tree with $\ell_1 = 3$, $\ell_2 = 3$, and $\ell_3 = 0$ 
is the spider~$\spider_{2, 2, 2}$, shown in Figure~\ref{fig:3-3-0}.
For that tree, $\ell_5 = 3$, which means
\[
  7 \ell_1 - 4 \ell_2 + \ell_3 + 4 \ell_5 
	  = 21 - 12 + 0 + 12
		= 21.  
\]
In every case, 
we have proved the claim 
$8 \ell_1(T) - 5 \ell_2(T) + 2 \ell_3(T) - \ell_4(T) + 4 \ell_5(T) \ge 10$.

Plugging the claim into our previous inequality for $\PP(T)$ gives
\begin{align*}
\PP(T) 
  &\le \frac{17}{30} - \frac{1}{6n} + \frac{3}{5n^2} 
	     - \frac{1}{30n^2} 
	    \Bigl( 8 \ell_1(T) - 5 \ell_2(T) + 2 \ell_3(T) - \ell_4(T) + 4 \ell_5(T) \Bigr) \\
	&\le \frac{17}{30} - \frac{1}{6n} + \frac{3}{5n^2} - \frac{10}{30 n^2} \\
	&= \frac{17}{30} - \frac{1}{6n} + \frac{4}{15 n^2} \, , 
\end{align*}
which is the desired upper bound.

Next we prove the lower bound on~$\PP(T)$.
By the induction hypothesis,
\begin{align*}
\PP(T) 
  &= 1 - \frac{1}{n^2} \sum_{v \in V(T)} \sum_{T'} \order{T'} \PP(T') \\
  &\ge 1 - \frac{1}{n^2} \sum_v \sum_{T'} \Bigl( 
	       \frac{17}{30} \order{T'} - \frac{1}{6} 
				   + \frac{4}{15\order{T'}} + \Delta(\order{T'}) \Bigr) \\
	&= 1 - \frac{17n(n - 1)}{30 n^2} + \frac{2(n - 1)}{6n^2} 
	     - \frac{4}{15n^2} \sum_v \sum_{T'} \frac{1}{\order{T'}} 
			 - \frac{1}{n^2} \sum_v \sum_{T'} \Delta(\order{T'}) \\
	&= \frac{13}{30} + \frac{17}{30n} 
	      - \frac{4}{15n^2} \sum_v \sum_{T'} \frac{1}{\order{T'}}
      	+ \frac{n - 1}{3n^2} 
	      - \frac{1}{n^2} \sum_v \sum_{T'} \Delta(\order{T'}) \\
	&= \frac{13}{30} + \frac{17}{30n} 
       - \frac{4}{15n^2} \sum_v \sum_{T'} \frac{1}{\order{T'}}
			 + \frac{n - 1}{3n^2}
		   - \frac{1}{n^2} \Bigl( \frac{1}{3} \ell_1(T) + \frac{2}{45} \ell_3(T) \Bigr) 
			     \\
	&= \frac{13}{30} + \frac{17}{30n} 
	     - \frac{4}{15n^2} \sum_v \sum_{T'} \frac{1}{\order{T'}}
			 + \frac{1}{3 n^2} \Bigl( n - 1 - \ell_1(T) - \frac{2}{15} \ell_3(T) \Bigr) \, .
\end{align*}
By Lemma~\ref{lemma:ell}(c) and Lemma~\ref{lemma:ell}(d), 
the expression in parentheses is nonnegative:
\[
  \ell_1(T) + \frac{2}{15} \ell_3(T) 
	  = \frac{1}{2} \ell_1(T) + \frac{1}{2} \ell_{n - 1}(T) + \frac{2}{15} \ell_3(T)
		\le \frac{1}{2} \sum_{k = 1}^{n - 1} \ell_k(T)
		= \frac{1}{2} \cdot 2(n - 1)
		= n - 1 .
\]
Hence
\[
  \PP(T) 
    \ge \frac{13}{30} + \frac{17}{30n} 
		      - \frac{4}{15 n^2} \sum_v \sum_{T'} \frac{1}{\order{T'}} \, . 		
\]

We claim that the double sum can be bounded as follows:
\[
  \sum_v \sum_{T'} \frac{1}{\order{T'}} \le n .
\]
Given a vertex~$v$ and one of its neighboring vertices~$w$,
recall from the proof of Lemma~\ref{lemma:ell}(c) that the tree~$T_{v, w}$ is 
the component of $T \smallsetminus v$ that contains~$w$.
We can rewrite the inequality as
\[
   \sum_{\substack{v, w\\ v \sim w}} \frac{1}{\order{T_{v, w}}} \le n .
\]
For every edge~$\{ v, w \}$ of~$T$, we have
\[
  \frac{1}{\order{T_{v, w}}} + \frac{1}{\order{T_{w, v}}}
	  = \frac{\order{T_{v, w}} + \order{T_{w, v}}}{\order{T_{v, w}} \cdot \order{T_{w, v}}} 
		= \frac{n}{\order{T_{v, w}} \cdot \order{T_{w, v}}} 
		\le \frac{n}{n - 1} \, .
\]
Hence we can sum over every edge:
\[
  \sum_{\substack{v, w\\ v \sim w}} \frac{1}{\order{T_{v, w}}} 
	  \le (n - 1) \cdot \frac{n}{n - 1} = n , 
\]
which establishes the claimed bound on the double sum. 

Plugging this bound on the double sum into our previous inequality for $\PP(T)$ gives
\[
  \PP(T) 
	  \ge \frac{13}{30} + \frac{17}{30n} 
		      - \frac{4}{15n^2} \sum_v \sum_{T'} \frac{1}{\order{T'}}
		\ge \frac{13}{30} + \frac{17}{30n} - \frac{4}{15n^2} \cdot n 
		= \frac{13}{30} + \frac{3}{10n} \, ,
\]
which is the desired lower bound.
\end{proof}

\section{Other questions}
\label{sec:concluding remarks}

The main question arising from our work is to resolve
Conjectures~\ref{conj:half random tree bounds} and~\ref{conj:all random}, namely, 
to show that the path and star have the extreme probabilities of winning 
among all trees of fixed order in both the semi-random and all-random models.
In this section we outline a number of other possible directions of future research.

\subsection{Other models}

The rules of the game studied in this paper can be varied or extended in numerous ways.  We mention a few explicit cases that are similar in spirit and might lead in interesting directions.

\subsubsection*{Guessing edges}

Rather than choosing a vertex, players might alternate choosing \emph{edges}, with the goal being to isolate the randomly selected target vertex.  This version of the game is reminiscent of the random process called \emph{cutting down trees}, which was introduced by Meir and Moon \cite{MM} and is an active area of research (see, e.g., \cite{ABetal}).

\subsubsection*{Searching on general graphs}

In \cite{EZ-K-S}, the binary search algorithm on trees was extended to a model on general connected graphs: upon querying a vertex $v$, the player either is informed that $v$ is the target or is given an edge out of $v$ that lies on a shortest path from $v$ to the target.  One could extend the two-player version of the game to this model.  On some graphs, and unlike for trees, the first player may be at a marked disadvantage in this generalization: for example, on the complete graph $K_n$, the first player wins with probability $\frac{1}{n}$.  

\subsubsection*{Mis\`ere search}

In the mis\`ere version of the game, the target vertex is poisoned, and the player who finds it loses.  For example, the one-vertex tree is a first player loss, the two-vertex tree is a fair game, and the first player wins on the three-vertex tree with probability $\frac{2}{3}$ if they choose the central vertex and with probability $\frac{1}{3}$ if they choose a leaf.  

\subsubsection*{Continuous models}

The following continuous model of binary search also admits a two-player version: a target point is selected uniformly at random in the real interval $[a, b]$, and a player wins if they pick a point within some fixed distance $\varepsilon > 0$ of the target; non-winning guesses are informed whether the guess is too large or too small.  Higher-dimensional analogues are also possible: a guess might consist of a hyperplane, with the goal being to bound the target point in a region of sufficiently small volume; or a guess might consist of a point and a hyperplane through that point, with the goal of choosing a point within $\varepsilon$ of the target.

\subsection{Probability distributions}

In the models where one or both players play randomly, 
the set of probabilities of a first-player win lies in some nontrivial interval~$[a, b]$.  
As the order~$n$ of the trees goes to infinity, 
is the set of these probabilities dense in some interval?  
Is there a limiting distribution of probabilities of winning, 
where (for example) we think of drawing a vertex-labeled tree uniformly at random?

\subsection{Random vs.\ random as a random process}

The all-random game (Section~\ref{sec:random}) involves no strategy, 
so we can view it as a purely random process.  
Namely, we are given a tree~$T$,
one vertex of which has been randomly selected as the target.  
At each step, a vertex~$v$ is chosen at random;
if $v$ is not the target, 
then the tree is cut down to the subtree of~$T \smallsetminus v$ in which the target lies.
The \emph{stopping time} is the number of steps it takes to hit the target.
Our problem can be restated as bounding the probability that the stopping time is odd.  

For the star~$S_n$, the stopping time has expected value about~$n/3$.
For the path~$P_n$, the stopping time has expected value about~$2 \ln n$.
Does every tree with $n$~vertices have expected stopping time between 
those of the path and the star?
For a given tree,
is the probability distribution of the stopping time unimodal?
Answering these questions might help resolve Conjecture~\ref{conj:all random}.

\subsection{Fixed target}

The all-random game (Section~\ref{sec:random}) has three sources of randomness: 
the target vertex, the first player, and the second player.  
What if instead the target vertex were fixed (so that effectively we have a rooted tree)? 
Does the first player always win with probability at least~$\frac{1}{2}$?
After looking at small trees (up to five vertices), 
we tentatively conjecture that the answer is yes.
If the target is a leaf, then a simple induction shows that the game is fair
(win probability exactly~$\frac{1}{2}$). 

\subsection{Relation with the gold grabber game}

In \cite{SS}, Seacrest and Seacrest consider the following (non-random, complete information) game on trees: some coins of various values are distributed on the vertices of a tree, and players take turns selecting a leaf and collecting the coins on the chosen leaf.  Their main theorem is that on a tree with an even number of vertices, the first player can always guarantee to acquire at least half the total value of coins.
Our game could be rephrased in similar language, with the target having a coin of positive value and all other vertices having coins of value $0$.  In this rephrasing, the key difference between our game and the game considered by Seacrest and Seacrest is that the players do not know the distribution of the coins.  Are there interesting variations of the binary search game involving more elaborate distributions of weight?

\subsection{Other constraints on limb numbers}

Lemma~\ref{lemma:ell} gives a number of constraints on the limb numbers~$\ell_i(T)$, which are used for the bounds in several results in Sections~\ref{sec:mixed} and~\ref{sec:random}.  These relations are not exhaustive; for example, the proof of Lemma~\ref{lemma:ell}(g) may be extended to show that
\[
\ell_1(T) + \ell_3(T) + \dots + \ell_{2k - 1}(T) 
  \geq \ell_2(T) + \ell_4(T) + \dots + \ell_{2k}(T)
\]
for every tree~$T$ of order at least~$4k$.  
On the other hand, for any positive integers~$a$ and~$b$, 
one can construct a tree~$T$ such that $\ell_2(T) = a$ and $\ell_3(T) = b$: 
if $a = b = 1$ then take $T$ to be the spider~$\spider_{2, 1, 1}$, 
shown at the top of Figure~\ref{fig:ell2 ell3},
while if $a + b > 2$ then take $T$ to be the bottom tree in Figure~\ref{fig:ell2 ell3}
with $a$~branches on the left and $b$~branches on the right.
\begin{figure}
\begin{center}
\scalebox{.75}{
\begin{tikzpicture}[node distance = 2cm, v/.style={circle,draw}]
\node[v] (1) {};
\node[v] (2) [right of = 1] {};
\node[v] (3) [right of = 2] {};
\node[v] (4) [above right of = 3] {};
\node[v] (5) [below right of = 3] {};
\draw (1) -- (2) -- (3) -- (4);
\draw (3) -- (5);
\end{tikzpicture}}
\end{center}
\begin{center}
\scalebox{.75}{\begin{tikzpicture}[v/.style={circle,draw}]
\node (c) at (0, 0) [v] {};
\node (11) at ({-sqrt(3)}, -1) [v] {};
\node (12) at ({-2*sqrt(3)}, -2) [v] {};
\node (21) at ({-sqrt(3)}, 1) [v] {};
\node (22) at ({-2*sqrt(3)}, 2) [v] {};
\node (31) at ({- 2 * cos(15)}, { 2 * sin(15)}) [v] {};
\node (32) at ({- 4 * cos(15)}, { 4 * sin(15)}) [v] {};
\draw  (c)  --   (31) --  (32) ;
\draw  (c)  --   (21) --  (22) ;
\draw  (c)  --   (11) --  (12) ;
\node at (-3, -.4) {$\vdots$};
\node (41) at ({2 * cos(30)}, {2 * sin(30)}) [v] {};
\node (42) at ({2 * cos(30) + 2 * cos(37)}, {2 * sin(30) + 2*sin(37)}) [v] {};
\node (43) at ({2 * cos(30) + 2 * cos(23)}, {2 * sin(30) + 2*sin(23)}) [v] {};
\node (51) at ({ 2 * cos(15)}, { 2 * sin(15)}) [v] {};
\node (52) at ({ 2 * cos(15) + 2 * cos(22)}, { 2 * sin(15) + 2 * sin(22)}) [v] {};
\node (53) at ({ 2 * cos(15) + 2 * cos(8)}, { 2 * sin(15) + 2 * sin(8)}) [v] {};
\node (61) at ({2 * cos(30)}, {-2 * sin(30)}) [v] {};
\node (62) at ({2 * cos(30) + 2 * cos(37)}, {-2 * sin(30) - 2*sin(37)}) [v] {};
\node (63) at ({2 * cos(30) + 2 * cos(23)}, {-2 * sin(30) - 2*sin(23)}) [v] {};
\draw  (c)  --   (41) --  (42) ;
\draw (41) -- (43);
\draw  (c)  --   (51) --  (52) ;
\draw (51) -- (53);
\draw  (c)  --   (61) --  (62) ;
\draw (61) -- (63);
\node at (3, -.4) {$\vdots$};
\end{tikzpicture}}
\end{center}
\caption{Trees with specified values for $\ell_2$ and $\ell_3$}
\label{fig:ell2 ell3}
\end{figure}
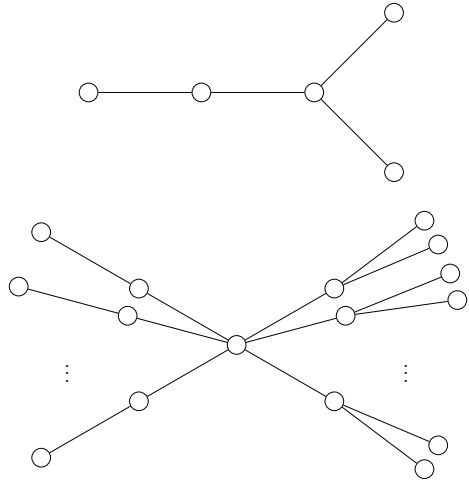
What else can one say about the constraints satisfied by the tuples~$(\ell_1, \dots, \ell_k)$?  For example, are the tuples~$(\ell_1, \dots, \ell_k)$ the set of lattice points in a polytope (at least for $n$ sufficiently large)?

\section*{Acknowledgments}

We first encountered a version of this problem on 
the Art of Problem Solving forum~\cite{AoPS}.
We thank Stephen Merriman for proposing the initial version of this problem.  
We thank Zachary Abel and Alison Miller for performing some helpful computer calculations.
We are especially grateful to Simon Rubinstein-Salzedo 
for asking about playing randomly, 
for formulating the tree version of the problem, 
and for writing a useful computer program.  
We thank the two referees for their numerous helpful comments.  
We used the SageMath computer algebra system~\cite{Sage} to test our conjectures.
The second author's research was supported in part by a ORAU Powe Award and 
a Simons Collaboration Grant (634530).

\end{document}